\documentclass[11pt,reqno]{amsart}

\usepackage{arxiv}
\usepackage{amsmath,amssymb,amsthm,mathtools}
\usepackage{tikz}
\numberwithin{equation}{section}
\renewcommand{\E}{\mathbb{E}}
\renewcommand{\Pr}{\mathbb{P}}

\newcommand{\cW}{\mathcal{W}}
\newcommand{\cM}{\mathcal{M}}
\newcommand{\cL}{\mathcal{L}}
\newcommand{\cH}{\mathcal{H}}
\newcommand{\cP}{\mathcal{P}}
\newcommand{\cK}{\mathcal{K}}
\newcommand{\cG}{\mathcal{G}}
\newcommand{\cE}{\mathcal{E}}
\newcommand{\Ent}{\mathcal{I}}
\newcommand{\dens}{\mathsf{d}}
\newcommand{\dist}{\operatorname{dist}}
\newcommand{\KL}{D_{\mathrm{KL}}}
\renewcommand{\dd}{\,\mathrm{d}}
\renewcommand{\ip}{i_p}
\newcommand{\cut}{\square}

\DeclareMathOperator{\Ber}{Ber}

\hypersetup{
  pdftitle={A first order phase transition for lower tails in sparse random graphs},
  pdfauthor={Byron Chin},
  pdfkeywords={subgraph lower tail, graphon variational problem, first order phase transition, replica symmetry, relative entropy}
}

\title[First order transition for sparse lower tails]
{A first order phase transition for lower tails in sparse random graphs}

\author{Byron Chin}
\address{School of Mathematics, Georgia Institute of Technology}
\email{bchin@gatech.edu}

\begin{document}

\begin{abstract}
We study the graphon variational problem that governs lower tail large deviations of the number of copies of a fixed nonbipartite graph $H$ in a sparse Erd\H{o}s--R\'enyi random graph. Our main new insight is that any non-constant minimizer of the variational problem exhibits a discontinuous jump away from the constant graphon. We deduce that the replica-symmetric and symmetry-breaking regimes are separated by a single critical parameter and that the phase transition is first order. In the supercritical regime $p \gg n^{-1/m_2(H)}$, we transfer these results to the conditioned random graph, showing that the edge density is a discontinuous order parameter and that the conditioned graph concentrates in cut distance near the full minimizer set.
\end{abstract}

\maketitle

\section{Introduction}

Let $H$ be a fixed graph and let $G(n,p)$ be the Erd\H{o}s--R\'enyi random graph. We write $N_H(G)$ for the number of copies of $H$ in a graph $G$ and $N_H(p):=N_H(G(n,p))$, so that $\E[N_H(p)]=N_Hp^{e(H)}$, where $N_H = N_H(1)$ is the number of copies of $H$ in $K_n$. For $\eta\in(0,1)$ we study the lower tail large deviation event
\begin{equation}\label{eq:LT}
  \cL_p(H,\eta):=\bigl\{G:N_H(G)\le\eta\,\E[N_H(p)]\bigr\},
\end{equation}
and we parametrize the deficit by $q\in(0,1)$ through $\eta=q^{e(H)}$.

The graphon variational approach to study such events was initiated by the seminal work of Chatterjee and Varadhan \cite{ChatterjeeVaradhan}, who established a large deviation principle for $G(n,p)$ at constant $p$ in the space of graphons. Their result implies two statements about the lower tail event. The first is that the exponential rate of $\Pr(\cL_p(H,\eta))$ is the value of an entropy variational problem over graphons; the second is that a graph drawn from the conditional distribution is exponentially unlikely to lie far in cut distance from the set of optimizers of that problem. While graphons are no longer a convenient limit object for sparse random graphs where $p=o(1)$, the variational answer to the lower tail problem continues to hold. Kozma and Samotij \cite{KozmaSamotij} computed the lower tail asymptotics for an arbitrary graph $H$ in terms of the variational problem throughout the range $p\gg n^{-1/m_2(H)}$, which is the optimal range in which such a result can be expected to hold. This ``supercritical'' regime \cite{JenssenPerkinsPotukuchiSimkin} is the range of sparsities we consider as well.

The variational problem that arises in sparse graphs is the following. Let
\begin{equation*}
  h(x):=x\log x-x+1,\qquad x\in[0,1],
\end{equation*}
with $h(0)=1$, so that $h$ is the $p\to0$ limit of the rescaled relative entropy, in the sense that $p^{-1}\ip(px)\to h(x)$ uniformly on $[0,1]$, where $\ip(x)$ denotes the relative entropy of $\Ber(x)$ with respect to $\Ber(p)$. For a graphon $W\colon[0,1]^2\to[0,1]$ set
\begin{equation*}
  \Ent(W):=\int_{[0,1]^2}h(W),\qquad
  \dens(W):=\int_{[0,1]^2}W,\qquad
  t(H,W):=\int_{[0,1]^{V(H)}}\prod_{ij\in E(H)}W_{x_ix_j},
\end{equation*}
where we abbreviate $W_{xy}=W(x,y)$ and suppress Lebesgue measure. We call $\Ent$ the \emph{entropy cost}, $\dens$ the \emph{edge density}, and $t(H,\cdot)$ the \emph{$H$-density}. The lower tail variational problem for $H$ is then
\begin{equation}\label{eq:Phi}
  \Phi(H,\eta):=\min\bigl\{\Ent(W):t(H,W)\le\eta\bigr\},
\end{equation}
where the minimum is attained by compactness of the graphon space, and we abbreviate $\Phi(q):=\Phi(H,q^{e(H)})$. The constant graphon $W\equiv q$ is always a feasible solution, so
\begin{equation}\label{eq:Phi-le-h}
  \Phi(q)\le h(q)\qquad\text{for all }q\in(0,1).
\end{equation}
Equality in \eqref{eq:Phi-le-h} is called \emph{replica symmetry}: the cheapest way to suppress copies of $H$ is to thin the edge density uniformly from $p$ to $qp$. Strict inequality is called \emph{symmetry breaking}: some inhomogeneous configuration is strictly cheaper.

The variational problem \eqref{eq:Phi} was studied systematically by Zhao \cite{ZhaoLowerTail}. He showed that for $q$ sufficiently close to $1$ the constant graphon is the unique minimizer and that, when $H$ is nonbipartite, for $q$ sufficiently close to $0$ it is not a minimizer, and he conjectured that these two regimes are always separated by a single threshold.

\begin{conjecture}[{\cite[Conjecture 2.4]{ZhaoLowerTail}}]\label{conj:zhao}
Let $H$ be a nonbipartite graph. There is $r_H^*\in(0,1)$ such that $W\equiv q$ is a minimizer of \eqref{eq:Phi} if and only if $q\ge r_H^*$, and is the unique minimizer if and only if $q>r_H^*$.
\end{conjecture}

There are two features of \cref{conj:zhao} that will be relevant below. First, it does not require fully solving the variational problem. Determining symmetry breaking optimizers is a separate problem of great interest (e.g. \cite[Section 6]{ZhaoLowerTail}). Second, the ``only if'' in the uniqueness claim asserts nonuniqueness at the threshold itself, so that multiple phases coexist there. Zhao noted this in \cite[Footnote 2]{ZhaoLowerTail}, where he observed on the basis of the shape of the phase diagram that the lower tail optimizer should change discontinuously across the boundary, in contrast to the upper tail.

In this paper we prove \cref{conj:zhao}, and we show in addition that the coexistence at the threshold takes a quantitative form: the two phases differ in edge density by an explicit positive amount. From this we deduce that the transition is of first order. A feature of our proof is that it does not require identifying the symmetry breaking minimizer. We then transfer the resulting phase diagram to the conditioned random graph in the aforementioned supercritical regime.

\subsection{Results}

Throughout the rest of the paper $H$ is a fixed nonbipartite graph, and we suppress the dependence on $H$ in the notation. Let
\begin{equation*}
  \cM_q:=\bigl\{W:t(H,W)\le q^{e(H)},\ \Ent(W)=\Phi(q)\bigr\}
\end{equation*}
denote the set of minimizers of \eqref{eq:Phi}, understood throughout modulo measure-preserving relabelings. Since every feasible graphon has entropy cost at least $\Phi(q)$, the set $\cM_q$ is the intersection of the closed set $\{t(H,\cdot)\le q^{e(H)}\}$ with the closed set $\{\Ent\le\Phi(q)\}$ inside the compact graphon space, and it is therefore compact and nonempty (see \cref{lem:lsc}). We also define
\begin{equation*}
  \kappa(q):=\frac{e(H)^2q^{2e(H)-2}}{8c_H^2\log^3(1/q)} \text{ for } q\in(0,1),
\end{equation*}
which is an increasing function of $q$ where $c_H$ is the constant of \cref{lem:approx-sidorenko}.

\begin{theorem}\label{thm:variational}
There is a unique $q_c\in(0,1)$ with the following properties.
\begin{enumerate}[label=\textup{(\roman*)},leftmargin=*]
  \item\label{it:rs} If $q>q_c$ then $\cM_q=\{W\equiv q\}$.
  \item\label{it:sb} If $q<q_c$ then $\Phi(q)<h(q)$, every $W\in\cM_q$ is nonconstant, and
    \begin{equation}\label{eq:sb-gap}
      \dens(W)\ge q+\kappa(q).
    \end{equation}
  \item\label{it:coex} At $q=q_c$ both the constant graphon $W\equiv q_c$ and at least one nonconstant graphon belong to $\cM_{q_c}$; moreover every nonconstant $W\in\cM_{q_c}$ satisfies $\dens(W)\ge q_c+\kappa(q_c)$.
\end{enumerate}
\end{theorem}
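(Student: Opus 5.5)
The plan has three parts. First, a rigidity estimate forcing every nonconstant minimizer to have edge density at least $q+\kappa(q)$. Second, monotonicity of symmetry breaking in $q$, which defines $q_c$. Third, compactness at $q=q_c$.

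\emph{Step 1 (density gap).} Let $W\in\cM_q$ and write $d=\dens(W)$ and $e=e(H)$.
\begin{itemize}
\item Jensen gives $\Ent(W)\ge h(d)$. Since $h$ is strictly decreasing on $[0,1]$ and $\Ent(W)=\Phi(q)\le h(q)$, we get $d\ge q$.
\item If $d=q$, then equality holds in Jensen. Strict convexity of $h$ then forces $W\equiv q$.
\item So a nonconstant $W$ has $d>q$. Since $h''(x)=1/x\ge1$ on $[0,1]$, a second-order Taylor estimate gives
\[
\tfrac12\|W-d\|_2^2\le \Ent(W)-h(d)\le h(q)-h(d)\le \log(1/q)\,(d-q).
\]
\item On the other side, $t(H,W)\le q^{e}<d^{e}$. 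I would apply \cref{lem:approx-sidorenko}, which I expect has the form $d^{e}-t(H,W)\le c_H\cdot(\text{a norm of }W-d)$, say in $L^2$. It would be used after first reducing to $d\le q+O(\log(1/q))$ or some similar a priori bound; the extra logarithms in $\kappa$ presumably come from converting between norms or bounding $d$.
\item Combining with $d^{e}-q^{e}\ge e\,q^{e-1}(d-q)$ gives
\[
e\,q^{e-1}(d-q)\le c_H\sqrt{2\log(1/q)(d-q)},
\]
so that $d-q\gtrsim e^2q^{2e-2}/(c_H^2\log(1/q))$.
\end{itemize}
Tracking the constants from the lemma should give exactly $\kappa(q)$ and hence \eqref{eq:sb-gap}. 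I would also record that $\kappa$ is continuous and positive.

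\emph{Step 2 (single threshold).} Define $q_c:=\sup\{q:\Phi(q)<h(q)\}$. By Zhao's results, every sufficiently small $q$ is symmetry breaking and every $q$ near $1$ is not, so $q_c\in(0,1)$. The key claim is the following: if $q$ admits a nonconstant minimizer $W$ (in particular, if $\Phi(q)<h(q)$), then every $q'<q$ satisfies $\Phi(q')<h(q')$.
\begin{itemize}
\item The graphon $\lambda W$ with $\lambda=q'/q$ is feasible at $q'$, because $t(H,\lambda W)=\lambda^{e}t(H,W)$.
\item So it suffices to show that $f(\lambda):=\Ent(\lambda W)-h(\lambda q)$ satisfies $f(\lambda)<0$ for $\lambda<1$, given $f(1)\le0$.
\item We have $f'(\lambda)=\int W\log(\lambda W)-q\log(\lambda q)$. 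Step 1 gives $d\ge q+\kappa$, and $\|W-d\|_2$ is controlled by the entropy gap; together these should let me sign $f'$ appropriately.
\item If the direct scaling fails, I would instead try a convex combination of $W$ with a constant graphon, or use the envelope/Lagrange-multiplier characterization of $\Phi'$. In that route, the derivative of $\Phi$ is governed by a minimizer whose density exceeds $q$, whereas $h'(q)=\log q$. The density gap should then make $h-\Phi$ strictly monotone on the symmetry-breaking side.
\end{itemize}
This monotonicity step is where I expect the main difficulty.

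Given the claim, the conclusions follow. For $q<q_c$ there is a symmetry-breaking $q''\in(q,q_c)$, so $\Phi(q)<h(q)$, and every minimizer is nonconstant and satisfies \eqref{eq:sb-gap}. For $q>q_c$ we have $\Phi(q)=h(q)$. A nonconstant minimizer there would, by the claim, make every $q'\in(q_c,q)$ symmetry breaking, which is a contradiction. This proves \ref{it:rs}, \ref{it:sb} and the uniqueness of $q_c$.

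\emph{Step 3 (coexistence).} $\Phi$ is continuous in $q$, by compactness of graphon space and lower semicontinuity (\cref{lem:lsc}); $h$ is continuous too.
\begin{itemize}
\item Letting $q\downarrow q_c$ gives $\Phi(q_c)=h(q_c)$, so $W\equiv q_c\in\cM_{q_c}$.
\item Take $q_n\uparrow q_c$ and minimizers $W_n\in\cM_{q_n}$. Pass to a cut-distance subsequential limit $W$. Then $t(H,W)\le q_c^{e}$ by continuity of $t(H,\cdot)$, and $\Ent(W)\le\liminf\Phi(q_n)=\Phi(q_c)$ by lower semicontinuity, so $W\in\cM_{q_c}$.
\item Since $\dens(\cdot)$ is cut-continuous and $\kappa$ is continuous, $\dens(W)\ge q_c+\kappa(q_c)>q_c$. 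Hence $W$ is nonconstant.
\end{itemize}
Finally, Step 1 applied directly at $q_c$ gives the density bound for every nonconstant element of $\cM_{q_c}$, which completes \ref{it:coex}.
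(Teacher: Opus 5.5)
Your architecture matches the paper's: a density gap for nonconstant minimizers, downward propagation by dilation, $q_c$ defined as a supremum, and compactness at $q_c$. Step~3 is fine. However, the two load-bearing steps do not go through as written.

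\textbf{Step 1.} The final inference runs the wrong way. Put $x=d-q>0$ and $e=e(H)$. The inequality $e\,q^{e-1}x\le c_H\sqrt{2\log(1/q)\,x}$ is equivalent to $x\le 2c_H^2\log(1/q)/(e^2q^{2e-2})$, which is an \emph{upper} bound on $x$. More generally, your entropy estimate only gives $\|W-d\|_2^2\le 2\log(1/q)\,x$. So a deficit bound $d^{e}-t(H,W)\lesssim\|W-d\|_2^k$ yields a lower bound on $x$ only if $k>2$.

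The missing idea is that the Sidorenko deficit is genuinely cubic. Expand $t(H,d+V)$ with $V=W-d$. The linear terms vanish because $\int V=0$. The star terms are nonnegative by Jensen applied to the degree function $x\mapsto\int V(x,y)\,\mathrm{d}y$. Every remaining term contains a triangle, a $P_3$, or a $P_2\sqcup K_2$, and is bounded by $\|V\|_2^3$. This gives Lemma~\ref{lem:approx-sidorenko}: $d^{e}-t(H,W)\le c_H\|W-d\|_2^3$.

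With that input your chain becomes $e\,q^{e-1}x\le c_H\bigl(2\log(1/q)\,x\bigr)^{3/2}$, which gives $x\ge e^2q^{2e-2}/(8c_H^2\log^3(1/q))=\kappa(q)$ exactly. The $\log^3$ comes from the exponent $3/2$, not from norm conversions. Once repaired, your centering at $d$ via Jensen is a clean equivalent of the paper's centering at $q$ with $\sigma=\|W-q\|_2$.

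\textbf{Step 2.} You flag the downward-propagation claim as the main difficulty, and it is left unproved. Signing $f'$ cannot settle it. Indeed $f'(\lambda)=f(1)+(1+\log\lambda)(d-q)$ tends to $-\infty$ as $\lambda\downarrow0$. But it equals $d-q>0$ at $\lambda=1$ when $f(1)=0$, and that is exactly the case you need for part~(i). Neither the gap $\kappa$ nor the $L^2$ control is needed here. What closes the step is the exact dilation identity, obtained by expanding $h(x)=x\log x-x+1$:
\[
  \Ent(\lambda W)-h(\lambda q)=\lambda\bigl(\Ent(W)-h(q)\bigr)+\lambda\log\lambda\,\bigl(\dens(W)-q\bigr).
\]
Equivalently, $f''(\lambda)=(d-q)/\lambda$ and $f(0)=0$. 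For $\lambda\in(0,1)$ the first term is $\le0$, and the second is $<0$ as soon as $d>q$. Your own Jensen argument already gives $d>q$ for any nonconstant $W$ with $\Ent(W)\le h(q)$, so no density gap is required. With these two repairs your proof coincides with the paper's.
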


Parts \ref{it:rs}--\ref{it:coex} confirm \cref{conj:zhao} with $r^*_H=q_c$. Since $\kappa$ is increasing and \cite[Theorem 2.7]{ZhaoLowerTail} implies $q_c>0$, the density jump $\kappa(q_c)$ is always strictly positive.

Part \ref{it:coex} says that the symmetry breaking phase does not emerge continuously out of the constant phase, and the amount by which it fails to do so is measured by a jump in edge density
\begin{equation}\label{eq:Delta}
  \Delta:=\max_{W\in\cM_{q_c}}\dens(W)-q_c,
\end{equation}
where the maximum is attained because $\cM_{q_c}$ is compact and $\dens$ is continuous in the cut metric, and where $\Delta\ge\kappa(q_c)>0$. If we return to the parametrization of the rate function by the $H$-density $\eta=q^{e(H)}$ and write $\eta_c:=q_c^{e(H)}$, this jump forces a corner in $\Phi(H,\cdot)$ at $\eta_c$ whose size is controlled by $\Delta$. Here and below $\Phi'_\pm(H,\eta)$ denote the one-sided derivatives of $\Phi(H,\cdot)$ in $\eta$.

\begin{theorem}\label{thm:corner}
The function $\Phi(H,\cdot)$ coincides with $\eta\mapsto h(\eta^{1/e(H)})$ on $[\eta_c,1)$; in particular
\begin{equation}\label{eq:right-derivative}
  \Phi'_+(H,\eta_c)=\frac{\log q_c}{e(H)q_c^{e(H)-1}}.
\end{equation}
On the other side, the left derivative $\Phi'_-(H,\eta_c)$ exists and
\begin{equation}\label{eq:left-derivative}
  \Phi'_-(H,\eta_c)
  \ \ge\ \Phi'_+(H,\eta_c)+\frac{\Delta}{e(H)q_c^{e(H)}}
  \ \ge\ \Phi'_+(H,\eta_c)+\frac{e(H)q_c^{e(H)-2}}{8c_H^2\log^3(1/q_c)}.
\end{equation}
In particular $\Phi(H,\cdot)$ is not differentiable at $\eta_c$. 
\end{theorem}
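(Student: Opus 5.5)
The plan is to derive everything from \cref{thm:variational}, using only the scaling map $W\mapsto sW$ and the fact that $\Ent$ is finite and explicit.

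\emph{Right side.} For $\eta\ge\eta_c$ write $q=\eta^{1/e(H)}\ge q_c$. By \cref{thm:variational}\ref{it:rs},\ref{it:coex} the constant graphon is a minimizer, so $\Phi(H,\eta)=h(\eta^{1/e(H)})$ on $[\eta_c,1)$. Differentiating with $h'(q)=\log q$ and $dq/d\eta=1/(e(H)q^{e(H)-1})$ gives \eqref{eq:right-derivative}.

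\emph{Left side, lower bound on the difference quotients.} Let $e=e(H)$ and let $W^*\in\cM_{q_c}$ attain the maximum in \eqref{eq:Delta}, so $\dens(W^*)=q_c+\Delta$. Since $\Ent(W^*)=\Phi(q_c)=h(q_c)$, expanding $h$ gives
\[
\int W^*\log W^*-(q_c+\Delta)+1=q_c\log q_c-q_c+1,
\]
that is, $\int W^*\log W^*=q_c\log q_c+\Delta$. This is the key identity: the entropy equality converts the density excess into an excess of $\int W\log W$.

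For $\eta<\eta_c$ set $s=(\eta/\eta_c)^{1/e}<1$. Then $sW^*$ is a graphon with $t(H,sW^*)=s^e t(H,W^*)\le\eta$, so $\Phi(H,\eta)\le\Ent(sW^*)$. Since $\frac{d}{ds}\Ent(sW^*)=\int W^*\log(sW^*)$ is continuous at $s=1$ (dominated convergence, using $|x\log x|\le e^{-1}$), we have $\Ent(sW^*)=h(q_c)+(s-1)(q_c\log q_c+\Delta)+o(1-s)$. Also $1-s=(\eta_c-\eta)/(e\eta_c)+o(\eta_c-\eta)$. Combining these,
\[
\liminf_{\eta\uparrow\eta_c}\frac{\Phi(H,\eta_c)-\Phi(H,\eta)}{\eta_c-\eta}\ \ge\ \frac{q_c\log q_c+\Delta}{e\,q_c^{e}}=\Phi'_+(H,\eta_c)+\frac{\Delta}{e\,q_c^{e}}.
\]
Finally $\Delta\ge\kappa(q_c)$, and substituting the formula for $\kappa$ gives the second inequality in \eqref{eq:left-derivative}.

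\emph{Existence of $\Phi'_-$.} This is the step I expect to be the main obstacle. My plan is an envelope argument giving a matching bound on the $\limsup$. For $\eta<\eta_c$ take minimizers $W_\eta\in\cM_{\eta^{1/e}}$. By \cref{thm:variational}\ref{it:sb} these are nonconstant with density at least $\eta^{1/e}+\kappa(\eta^{1/e})$. By compactness and lower semicontinuity (\cref{lem:lsc}), every subsequential limit as $\eta\uparrow\eta_c$ lies in $\cM_{q_c}$ and is nonconstant.

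To compare in the other direction, I would push $W_\eta$ up to $H$-density $\eta_c$. The natural map is $W\mapsto\min(s^{-1}W,1)$, and I would first attempt the alternative $W\mapsto 1-s(1-W)$ restricted to a suitable range, to avoid truncation. Either way, I need to show
\[
\Phi(H,\eta_c)-\Phi(H,\eta)\le(\eta_c-\eta)\,\frac{\int W_\eta\log W_\eta}{e\,\eta}+o(\eta_c-\eta).
\]
Together with upper semicontinuity of $W\mapsto\int W\log W$ along the convergent family, and the bound $\int W\log W\le q_c\log q_c+\Delta$ on $\cM_{q_c}$ (the same identity as above, now with $\dens\le q_c+\Delta$), this would give the matching $\limsup$. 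The result would be $\Phi'_-(H,\eta_c)=(q_c\log q_c+\Delta)/(e\,q_c^{e})$.

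The delicate point is the truncation error where $W_\eta$ is near $1$. If it cannot be controlled, I would fall back on the stated inequality for the lower left Dini derivative, which already shows non-differentiability.
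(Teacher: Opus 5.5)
Your right-side computation and your lower bound on the left difference quotients follow the paper. You dilate a maximal-density $W^*\in\cM_{q_c}$; the paper does the same, recording $\Ent\bigl(\tfrac{q}{q_c}W_c\bigr)=h(q)+\tfrac{q}{q_c}\log\tfrac{q}{q_c}\,\Delta$ via the exact scaling identity rather than differentiating under the integral. The second inequality in \eqref{eq:left-derivative} then follows from $\Delta\ge\kappa(q_c)$, as you say.

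\textbf{The gap.} The existence of $\Phi'_-(H,\eta_c)$ is part of the statement, and you do not prove it. You leave the matching upper bound open and fall back on a bound for the lower left Dini derivative. That gives non-differentiability, but not the theorem as stated. Existence of one-sided derivatives is not automatic here, since the feasible set is not convex and $\Phi$ is not known to be convex.

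The paper obtains existence from Lemma~\ref{lem:semiconcave}. On $[\alpha,\beta]$ it writes $\Phi=\min_r g_r$ with $g_r(q)=\Ent(\min\{(q/r)W_r,1\})$ and $W_r\in\cM_r$. Each $g_r(q)-q^2/(2\alpha^2)$ is concave, for two reasons. First, $h''(x)=1/x$ bounds the curvature of $q\mapsto h(\min\{qw/r,1\})$ by $1/\alpha^2$. Second, $h'(1)=0$ makes this map $C^1$ across the truncation point. So $\Phi$ is locally semiconcave and has one-sided derivatives everywhere.

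\textbf{How your route can be completed.} Your envelope argument does go through, and the truncation you worry about is harmless. Let $\tilde h(x)=x\log x-x+1$ on $[0,\infty)$. Then $\tilde h\ge0=h(1)$ on $[1,\infty)$, so for $\lambda\ge1$ we have $\Ent(\min\{\lambda W,1\})\le\int\tilde h(\lambda W)=\lambda\Ent(W)+\lambda\log\lambda\,\dens(W)-\lambda+1$.

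Apply this to $W_q\in\cM_q$ with $q<q_c$ and $\lambda=q_c/q$. Since $\min\{\lambda W_q,1\}$ is feasible at $q_c$ and $\Phi(q_c)=h(q_c)$, the bound rearranges to $\Phi(q)\ge h(q)-\log(q_c/q)\,(\dens(W_q)-q)$.

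By compactness, Lemma~\ref{lem:lsc} and continuity of $\Phi$, every cut limit of $W_q$ as $q\uparrow q_c$ lies in $\cM_{q_c}$. Hence $\limsup_{q\uparrow q_c}\dens(W_q)\le q_c+\Delta$. So the $\limsup$ of the left difference quotients of $\Phi$ at $q_c$ is at most $\log q_c+\Delta/q_c$, which matches your lower bound. This proves existence, and it even upgrades the first inequality in \eqref{eq:left-derivative} to an equality, which the paper's argument does not give.

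One correction along the way: $W\mapsto\int W\log W$ is convex and only lower semicontinuous for $\delta_\cut$, so you cannot get the bound on $\limsup\int W_\eta\log W_\eta$ from semicontinuity. It comes instead from $\int W_\eta\log W_\eta=\Phi(H,\eta)+\dens(W_\eta)-1$ together with continuity of $\Phi$.
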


Estimate \eqref{eq:left-derivative} says that the transition is first order. The existence of the left derivative is not automatic, because the feasible set $\{t(H,\cdot)\le\eta\}$ is not convex and we do not know $\Phi(H,\cdot)$ to be convex; it follows instead from a local semiconcavity property of $\Phi$ (\cref{lem:semiconcave}). 

We conclude with a probabilistic result that transfers this phase diagram to the conditioned graph. Importantly, our result keeps track of the whole set of minimizers $\cM_q$ without explicitly identifying it. For a graph $G$ on $[n]$ we write $A_G$ for its adjacency step kernel on $[0,1]^2$. The cut distance $\delta_\cut$ is defined for arbitrary integrable symmetric kernels, so that $\delta_\cut(A_G/p,W)$ makes sense even though $A_G/p$ does not take values in $[0,1]$.

\begin{theorem}\label{thm:probabilistic}
Let $p=p_n\to0$ with $p_nn^{1/m_2(H)}\to\infty$, fix $q\in(0,1)$, and let $G_{\cL}$ be distributed as $G(n,p_n)$ conditioned on $\cL_p(H,q^{e(H)})$.
\begin{enumerate}[label=\textup{(\roman*)},leftmargin=*]
  \item\label{it:sat} The constraint is asymptotically saturated:
    \[
      N_H(G_{\cL})\big/\E[N_H(p_n)]\longrightarrow q^{e(H)}
    \]
    in probability.
  \item\label{it:cut} The conditioned graph concentrates near the minimizer set:
    \begin{equation}\label{eq:cut-conditioning}
      \dist_\cut\Bigl(\frac{A_{G_{\cL}}}{p_n},\cM_q\Bigr)\longrightarrow0
    \end{equation}
    in probability.
  \item\label{it:order} The normalized edge density $e(G_{\cL})/(\binom n2 p_n)$ is a discontinuous order parameter. If $q>q_c$ it converges in probability to $q$. If $q<q_c$ then for every $\varepsilon>0$
    \begin{equation*}
      \Pr\Bigl(\frac{e(G_{\cL})}{\binom n2 p_n}\ge q+\kappa(q)-\varepsilon\Bigr)\longrightarrow1.
    \end{equation*}
    If $q=q_c$ then for every $0<\varepsilon<\kappa(q_c)/2$
    \begin{equation}\label{eq:forbidden}
      \Pr\Bigl(q_c+\varepsilon<\frac{e(G_{\cL})}{\binom n2 p_n}<q_c+\kappa(q_c)-\varepsilon\Bigr)\longrightarrow0.
    \end{equation}
  \end{enumerate}
\end{theorem}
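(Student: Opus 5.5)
The plan is to deduce \cref{thm:probabilistic} from \cref{thm:variational} through a sparse large deviation upper bound in cut distance. The input is the Kozma--Samotij lower tail asymptotics, in the strengthened form their container/regularity argument actually provides. Throughout $p\gg n^{-1/m_2(H)}$. For any closed set $F$ of kernels that is $\delta_\cut$-closed in the appropriate sparse sense, we want
\[
\Pr\bigl(G(n,p)\in\cL_p(H,q^{e(H)}),\ \dist_\cut(A_G/p,\ F)<\varepsilon\bigr)\le\exp\bigl(-(1-o(1))\tbinom n2 p\,\inf\{\Ent(W):W\in F_{\varepsilon'},\ t(H,W)\le q^{e(H)}+\varepsilon'\}\bigr).
\]
We pair this with the matching lower bound $\Pr(\cL_p)=\exp(-(1+o(1))\binom n2 p\,\Phi(q))$. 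The key steps are as follows.

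First, we establish the upper bound. Using the sparse counting lemma (valid above $n^{-1/m_2(H)}$ via the KLR/containers machinery already used by Kozma--Samotij), any graph in $\cL_p$ has $A_G/p$ lying $o(1)$-close in cut distance to some graphon $W$ with values in $[0,1+o(1)]$ and $t(H,W)\le q^{e(H)}+o(1)$. We then cover the relevant set of such $W$ by finitely many cut balls. In each ball we bound the probability by the exponential of $-\binom n2 p$ times the entropy, via a tilting argument with step-function tests, using $p^{-1}\ip(px)\to h(x)$ uniformly. We also need lower semicontinuity of $\Ent$ (\cref{lem:lsc}) and continuity of $\Phi$ in $q$, which is needed to absorb the $+\varepsilon'$ slack. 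Continuity follows from monotonicity of $\Phi$ together with the local semiconcavity in \cref{lem:semiconcave}.

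Second, we derive (ii) by compactness. Fix $\varepsilon>0$ and let $F$ be the set of graphons at distance at least $\varepsilon$ from $\cM_q$. By compactness and lower semicontinuity, $\inf\{\Ent(W): W\in F,\ t(H,W)\le q^{e(H)}+\varepsilon'\}\ge\Phi(q)+\gamma$ for some $\gamma>0$ once $\varepsilon'$ is small. Dividing by $\Pr(\cL_p)$ then gives a conditional probability of at most $e^{-\gamma\binom n2p/2}\to0$. Part (i) follows the same way. If the $H$-density of $G_{\cL}$ were at most $(q-\delta)^{e(H)}$, the cost would be at least $\Phi(q-\delta)$, and $\Phi(q-\delta)>\Phi(q)$ by strict monotonicity. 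Strict monotonicity holds because any minimizer at a smaller $\eta$ can be raised slightly toward $1$ pointwise, decreasing $h$ since $h$ is decreasing on $[0,1]$.

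Third, (iii) follows from (ii) because $\dens$ is continuous in cut distance, even for unbounded kernels $A/p$ compared with bounded $W$, since $\dens$ is a cut-norm functional. Hence $e(G_{\cL})/(\binom n2 p)$ is within $o(1)$ of $\{\dens(W):W\in\cM_q\}$. This set is $\{q\}$ for $q>q_c$. For $q<q_c$ it lies in $[q+\kappa(q),\infty)$. At $q_c$ it lies in $\{q_c\}\cup[q_c+\kappa(q_c),1]$, by \cref{thm:variational}. This yields \eqref{eq:forbidden}.

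The main obstacle is the first step. The sparse cut-distance LDP upper bound must hold with the set $\cM_q$ kept abstract, down to the optimal threshold $m_2(H)$. The difficulty is that $A_G/p$ is unbounded, so the compactness of graphon space does not directly apply. I would first try to truncate, using the fact that $\cL_p$ only involves graphs with $e(G)\le(1+o(1))\binom n2 p$ with overwhelming probability. I would then apply the weak regularity lemma for upper-regular sparse graphs to pass to bounded step graphons. The remaining part would be extracted from the Kozma--Samotij container proof, which already produces a cut-close bounded approximant with controlled entropy.
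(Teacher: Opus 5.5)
\textbf{Parts (i) and (iii) are fine.} Your arguments for (i) and (iii) are essentially the paper's. Part (i) needs only the rate asymptotics of \cref{cor:ldp} and strict monotonicity of $\Phi$. Part (iii) is the $1$-Lipschitz property of $\dens$ in cut distance combined with \cref{thm:variational}.

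\textbf{The gap is step one, on which (ii) rests entirely.} As written, the claim that every $G\in\cL_p$ has $A_G/p$ cut-close to a bounded $W$ with $t(H,W)\le q^{e(H)}+o(1)$ is false. Take Alon's triangle-free $(n,d,\lambda)$-graphs with $d\asymp n^{2/3}$ and $\lambda=O(n^{1/3})$. They have $p\asymp n^{-1/3}\gg n^{-1/2}$ and lie in $\cL_p(K_3,\eta)$ for every $\eta$. By the expander mixing lemma they satisfy $\|A_G/p-1\|_\cut\le\lambda/d+O(1/n)\to0$, so every nearby bounded $W$ has $t(K_3,W)\approx1$.

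Sparse one-sided counting lemmas (the K\L{}R-type results of Conlon--Gowers--Samotij--Schacht, Balogh--Morris--Samotij, Saxton--Thomason) hold only for subgraphs of a \emph{typical} $G(n,p)$. You must therefore also show that the exceptional graphs are negligible against $\Pr(\cL_p)=e^{-\Theta(n^2p)}$. An a.a.s.\ statement does not do that on its own. The same issue arises for the upper-regularity you need in order to run a sparse regularity lemma on the unbounded kernel $A_G/p$.

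A possible repair: $\nu_{n,q}$ is stochastically dominated by $G(n,p)$ by Harris/Holley. Hence properties of the form ``every subgraph of $G(n,p)$ satisfies\ldots'', and upper-regularity, pass to $G_\cL$ with probability $1-o(1)$. None of this is in your plan.

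Nor can the missing piece be extracted from Kozma--Samotij. Their argument never produces a cut-close approximant of the sampled graph. It only controls the conditional one-edge marginals $\mathbf a(Y_S)$ after pinning, and says nothing about $A_Y-A_{\mathbf a}$.

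\textbf{How the paper bridges this.} That bridge is exactly what the paper supplies, and it avoids regularity lemmas, counting lemmas for the sampled graph, and unbounded kernels altogether.
\begin{itemize}
\item Since $a_e\le p$ by Harris, $W_{\mathbf a/p}$ is an honest graphon. The entropy accounting $\E\sum_e\ip(a_e)\le D_n\approx\binom n2p\,\Phi(q)$, together with the compactness gap of \cref{lem:graphon-gap}, forces $W_{\mathbf a/p}$ near $\cM_q$ with high probability.
\item The projection identity $\KL(\nu_s\|P)=\KL(\nu_s\|Q_s)+\sum_{e\notin S}\ip(a_e)$, with $P=\Ber(p)^{\otimes(E\setminus S)}$, shows that the average total correlation $\E_{Y_S}\KL(\nu_{Y_S}\|Q_{Y_S})$ is $o(n^2p)$.
\item Under the product measure $Q_s$, Bernstein's inequality and a union bound over $4^n$ cuts give $Q_s(\|A_Y-A_{\mathbf a}\|_\cut>\varepsilon p)\le e^{-c\varepsilon^2n^2p}$ on the unpinned edges.
\item The bound $Q(B)\le(\KL(Q\|P)+\log2)/L$ transfers this to $\nu_s$ at cost $o(1)$.
\item The pinned edges contribute $O(|S|/n^2)=O(\alpha)$ in expectation.
\end{itemize}
To keep your LDP route, you would have to state and prove the probabilistic counting and regularity step with the exceptional probability controlled relative to $\Pr(\cL_p)$, or conditionally via domination. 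As it stands, step one is an assertion, not a proof.
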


We place no hypotheses on $\cM_q$ in \eqref{eq:cut-conditioning}: the minimizer need not be unique, and the minimizers need not be multipodal. At coexistence the conditional law is free to select the constant phase, a nonconstant phase, or a mixture of the two, and \eqref{eq:cut-conditioning} gives concentration near the union of these, while \eqref{eq:forbidden} says that no mass survives strictly between the constant phase and the nonconstant ones.

\subsection{Related work}

\subsubsection*{Dense graphs}
Chatterjee and Varadhan \cite{ChatterjeeVaradhan} proved the graphon large deviation principle for $G(n,p)$ at constant $p$, identifying the rate function and describing the conditioned structure at the same time. The resulting theory of replica symmetry in exponential and constrained random graph models was developed by Chatterjee and Diaconis \cite{ChatterjeeDiaconis}, Radin and Yin \cite{RadinYin}, and Lubetzky and Zhao \cite{LubetzkyZhao}. For edge--$k$-star constraints, Kenyon, Radin, Ren, and Sadun \cite{KenyonRadinRenSadun} proved that typical graphs are multipodal. In the edge--triangle model, Radin, Ren, and Sadun \cite{RadinRenSadun} exhibited a symmetry breaking transition, and Neeman, Radin, and Sadun \cite{NeemanRadinSadun} determined sharp singularities as the constraining densities vary between curves of extremal values. DiCarlo and Sadun \cite{DiCarloSadun} illustrated the complexity of that phase diagram numerically, finding tripodal rather than bipodal structure in part of the undersaturated region, together with a discontinuous transition at its boundary. In this paper we study the sparse analog of this phenomenon, in a regime where the optimizers are not known in general.

\subsubsection*{Sparse subgraph counts}
When $p=o(1)$ the dense theory does not apply directly, and a different set of techniques is required. Chatterjee and Dembo \cite{ChatterjeeDembo} extended the variational asymptotics to the range $p\ge n^{-\alpha_H}$, and this inspired a line of work developing the technique and the associated regularity theory considerably further, by Eldan \cite{Eldan}, Cook and Dembo \cite{CookDembo}, Augeri \cite{Augeri}, and Cook, Dembo, and Pham \cite{CookDemboPham}. For lower tails, Kozma and Samotij \cite{KozmaSamotij} reached the full mean field range $p\gg n^{-1/m_2(H)}$ by a relative entropy increment argument, which can be interpreted as a probabilistic counterpart of the hypergraph container theorems of Balogh, Morris, and Samotij \cite{BaloghMorrisSamotij} and Saxton and Thomason \cite{SaxtonThomason}. Below the mean field range the lower tail probability exhibits Poisson-type asymptotics under fairly general hypotheses; see Janson and Warnke \cite{JansonWarnke}.

\subsubsection*{The variational problem and conditional structure}
Far less is known about the variational problem \eqref{eq:Phi} itself, and for a long time the only results were those proven by Zhao \cite{ZhaoLowerTail}, who identified a neighborhood of $q=1$ on which the constant graphon is the unique minimizer and a neighborhood of $q=0$ on which it is not a minimizer. Very recent work of Kwan and Pham \cite{KwanPham} implies a two-valued multipodal optimizer for sufficiently small $q$ and strictly 2-balanced edge-critical graphs $H$. We established sparse conditional structure in \cite{ChinStructure} under the hypothesis that the minimizer is known to be the constant graphon and to be unique, which confines $q$ to a proper subinterval of the replica symmetric phase, by combining the entropy increment scheme of \cite{KozmaSamotij} with a stability version of Zhao's argument. \Cref{thm:probabilistic}\ref{it:cut} extends this result to all of $(q_c,1)$, the entire symmetry breaking interval $(0,q_c)$, and the coexistence point $q_c$. To the best of our knowledge, this is the first result covering the symmetry breaking regime at fixed positive lower tails deficits in the supercritical regime. Our proof also works differently: we replace stability around a single known graphon by compact stability around the whole set of minimizers, and we pass from the conditional marginals to the sampled graph by a relative entropy projection rather than by subgraph counts.

\subsubsection*{The critical window}
The critical regime $p=\Theta(n^{-1/m_2(H)})$ has seen exciting developments in recent years. At this scaling, the mean field variational problem no longer captures the lower tail rate function, and the answer depends on the constant $c$ in $p=cn^{-1/m_2(H)}$ as well as on the deficit $\eta$. First for triangles, Jenssen, Perkins, Potukuchi, and Simkin \cite{JenssenPerkinsPotukuchiSimkin} imported methods from statistical physics to study a triangle-penalized Gibbs measure (also known as an exponential random graph) and obtained critical window asymptotics for all $c$ when $\eta\ge0.4993$ and for sufficiently small $c$ otherwise, finding a transition in $c$ for small deficits but none for $\eta\ge0.4993$. Their subsequent work \cite{JenssenPerkinsPotukuchiSimkinBP} obtained Bethe-type asymptotics for a broader range of parameters and more general graphs $H$, subject to a hard-core model upper bound on $p$. Very recently, their newest work \cite{JenssenPerkinsPotukuchiSimkinRS} expresses the lower tail rate function for strictly 2-balanced graphs $H$ and $\eta = 0$ as a variational problem optimizing a cost that interpolates smoothly between the mean-field entropy cost of the supercritical regime and the Poisson cost of the subcritical regime. The same variational problem was derived concurrently and more generally for lower tail events by Kwan and Pham \cite{KwanPham} via more combinatorial methods. 

\subsection{Proof overview}

The proof splits into two largely independent halves, which we discuss separately.

\subsubsection*{Analysis of the variational problem}
The main ingredient of this half is a new estimate at the constant graphon, which we now describe. The idea is to show that the constant graphon is always a (quantitatively) local minimizer of the variational problem. Strong convexity of $h$ gives, for every graphon $W$ with $\sigma:=\|W-q\|_2$ and $m:=\dens(W)-q$,
\begin{equation}\label{eq:overview-coercive-1}
  \Ent(W)-h(q)\ \ge\ -\log(1/q)\,m+\tfrac12\sigma^2 .
\end{equation}
On the other hand, expanding $t(H,\cdot)$ around the constant graphon $\mu:=\dens(W)$ gives the approximate Sidorenko inequality
\[
  \mu^{e(H)}-t(H,W)\ \le\ c_H\|W-\mu\|_2^3
\]
of \cref{lem:approx-sidorenko}. For triangles the exact expansion, with $V:=W-\mu$, reads
\[
  t(K_3,\mu+V)-\mu^3=3\mu\,t(P_2,V)+t(K_3,V);
\]
it has no first order term because $\int V=0$, a nonnegative second order term by Cauchy--Schwarz, and a cubic term which is controlled by the bound $|t(K_3,V)|\le\|V\|_2^3$. For general $H$ the star terms group to give a nonnegative contribution, and all remaining terms are again cubic. Since $\|W-\mu\|_2\le\sigma$ and $\mu^{e(H)}-q^{e(H)}\ge e(H)q^{e(H)-1}m$ when $m>0$, feasibility therefore forces $m\le c_H\sigma^3/(e(H)q^{e(H)-1})$, and substituting this into \eqref{eq:overview-coercive-1} yields
\begin{equation}\label{eq:overview-coercive-2}
  \Ent(W)-h(q)\ \ge\ \tfrac12\sigma^2-\frac{c_H\log(1/q)}{e(H)q^{e(H)-1}}\sigma^3
  \qquad\text{whenever } t(H,W)\le q^{e(H)} .
\end{equation}
The estimate \eqref{eq:overview-coercive-2} shows that any competitor of the constant graphon must be macroscopically far from it, and \eqref{eq:overview-coercive-1} then converts that distance into the edge density gap \eqref{eq:sb-gap}. 

The analysis of the phase transition centers on the scaling identity
\begin{equation*}
  \Ent(sW)-h(sq)=s\bigl(\Ent(W)-h(q)\bigr)+s\log s\bigl(\dens(W)-q\bigr),
  \qquad t(H,sW)=s^{e(H)}t(H,W)
\end{equation*}
which was already observed by Zhao \cite{ZhaoLowerTail}. A competitor $W$ to the constant graphon $q$ with $\dens(W)>q$ therefore dilates to a strictly better competitor at every smaller parameter, so the symmetry breaking set must be an interval $(0,q_c)$. The local minimizer estimate supplies the uniform gap $\dens(W)\ge q+\kappa(q)$ that is needed to pass to a subsequential limit along $q\uparrow q_c$ while remaining far from the constant graphon to prove coexistence. Dilating the resulting critical minimizer below $q_c$ produces an explicit competing branch $C$ whose left derivative at $q_c$ exceeds that of the replica symmetric branch by exactly $\Delta/q_c$, and comparing $C$ with $\Phi$ then yields the separation of one-sided derivatives.

A natural alternative route to the rate function, used by \cite{JenssenPerkinsPotukuchiSimkin,JenssenPerkinsPotukuchiSimkinBP}, is to introduce a triangle-penalized Gibbs measure and to estimate its edge density. At a first order transition this is delicate: a canonical ensemble tuned to the critical penalty may mix macroscopically distinct phases, so concentration of any subgraph or edge density is not available a priori. Our argument works directly with the rate function instead, bounding the derivatives by dilating potential optimizers. 

\subsubsection*{Transfer to the conditioned graph}
To analyze the conditioned graph we use the entropy decomposition technique of Kozma and Samotij \cite{KozmaSamotij}. This expresses the complicated conditional measure as a small mixture of nicer measures, each of which can be related to the optimizers of the variational problem directly. 

Write $\nu_{n,q}$ for the law of $G_{\cL}$. For the structural statements our proof proceeds as follows: given the conditional law $\nu_{n,q}$, the entropy increment lemma (\cref{thm:KS}\ref{it:KS-increment}) produces a small set $S$ of pinned edges such that, after conditioning on $(Y_e)_{e\in S}$, the one-edge marginals $\mathbf a$ approximately satisfy the lower tail condition. A compactness argument (\cref{lem:graphon-gap}) shows that such vectors whose rescaled step graphons stay at cut distance at least $\varepsilon$ from $\cM_q$ cost strictly more entropy than $-\log\Pr(\cL_p(H,q^{e(H)}))$ allows, and from this we deduce that almost every conditional component has marginals close to $\cM_q$.

Passing from the marginals to the sampled graph requires a second ingredient. Let $Q_s$ be the product measure with the conditional marginals and let $\nu_s$ be the true conditional law of the unpinned edges. The relative entropy $\KL(\nu_s\|Q_s)$ is the conditional total correlation, and the chain rule for relative entropy shows that its average is $o(n^2p)$, again by comparison with the lower tail rate. Under $Q_s$, Bernstein's inequality and a union bound over cuts give cut concentration at scale $\varepsilon p$ with failure probability $e^{-c\varepsilon^2n^2p}$. A relative entropy inequality then transfers this bound from $Q_s$ to $\nu_s$ at a cost of a factor $\KL(\nu_s\|Q_s)/(c\varepsilon^2n^2p)=o(1)$. Finally, the pinned edges carry expected mass $O(p|S|)$ by conditional domination, so that they are negligible in the $p$-rescaled cut norm since $S$ is small. This proves \eqref{eq:cut-conditioning}. Part \ref{it:sat} follows from strict monotonicity of $\Phi$, while Part \ref{it:order} then follows using the variational input of \cref{thm:variational}.

\subsection{Organization}

\Cref{sec:prelim} collects the graphon, variational problem, and entropy increment preliminaries. \Cref{sec:coercivity} proves the new local minimizer estimate. In \cref{sec:transition} we establish~\cref{thm:variational} and~\cref{thm:corner} -- the single threshold, coexistence, and the corner. \Cref{sec:conditioning} proves the conditioning theorem, and we conclude in \cref{sec:consequences} deducing saturation and the order parameter statements to complete~\cref{thm:probabilistic}.

\subsection{Notation}

Throughout, $\|\cdot\|_2$ and $\|\cdot\|_1$ denote $L^2$ and $L^1$ norms on $[0,1]^2$ with Lebesgue measure, and $\|\cdot\|_\cut$ the cut norm \eqref{eq:cut-norm}. We write $P_2$ for the path with two edges, so that
\[
  t(P_2,U)=\int_0^1\Bigl(\int_0^1U(x,y)\dd y\Bigr)^2\dd x .
\]
We write $v(H)$ and $e(H)$ for the numbers of vertices and edges of $H$, and
\[
  m_2(H):=\max\Bigl\{\frac{e(F)-1}{v(F)-2}:F\subseteq H,\ e(F)\ge2\Bigr\}
\]
for its $2$-density; since $H$ contains an odd cycle, $m_2(H)>1$. We write $\binom{[n]}2$ for the edge set of $K_n$. A constant $q\in[0,1]$ is identified with the constant graphon of that value. Asymptotic notation refers to $n\to\infty$ with $H$ and $q$ fixed, and $p=p_n$ always satisfies $p_n\to0$. Finally, $\log$ is the natural logarithm and $0\log0:=0$.

\subsection*{Acknowledgments}
The author thanks Sam van der Poel for sharing ~\cref{lem:approx-sidorenko} and~\cref{lem:semiconcave} to upgrade from triangles to non-bipartite $H$, and Ashwin Sah for helpful initial discussions.

\subsection*{AI Usage}
The main new ingredient, presented in Section~\ref{sec:coercivity}, and Theorems~\ref{thm:variational}\ref{it:rs}, \ref{thm:variational}\ref{it:sb} and~\ref{thm:probabilistic}\ref{it:order} were proven by the author for the case $H = K_3$ in 2025. ChatGPT, given a draft of these as input, was used for further discussion which led to the remaining results. Claude was used to assist in revising the manuscript. The author takes full responsibility for the content of the paper.

\section{Preliminaries}\label{sec:prelim}

\subsection{Graphons, cut distance, and the entropy cost}
Let $\cW$ be the set of symmetric measurable functions $W\colon[0,1]^2\to[0,1]$. For an integrable symmetric kernel $U$ put
\begin{equation}\label{eq:cut-norm}
  \|U\|_\cut:=\sup_{S,T\subseteq[0,1]}\Bigl|\int_{S\times T}U\Bigr|,
\end{equation}
the supremum over measurable sets, and for measure-preserving bijections $\phi$ of $[0,1]$ write \\ $W^\phi(x,y):=W(\phi(x),\phi(y))$ and
\begin{equation}\label{eq:cut-distance}
  \delta_\cut(U,W):=\inf_\phi\|U-W^\phi\|_\cut .
\end{equation}
We identify kernels at cut distance zero; the resulting space $\widetilde\cW$ of graphons is compact, and homomorphism densities are continuous on it. We use these facts in the form of \cite[Chapters 8--9]{LovaszBook}. Both $\Ent$ and $\dens$ are relabeling invariant and hence well defined on $\widetilde\cW$, and $\dens$ is $1$-Lipschitz for $\delta_\cut$ because $|\dens(U)-\dens(V)|\le\|U-V\|_\cut$. We will repeatedly use the elementary chain
\begin{equation}\label{eq:norm-chain}
  \|U\|_\cut\le\|U\|_1\le\|U\|_2 .
\end{equation}
For a constant target the infimum in \eqref{eq:cut-distance} is superfluous: $\delta_\cut(W,q)=\|W-q\|_\cut$.

We record without proof standard but useful properties of graphon space and their implications for the variational problem. First, the entropy cost is not continuous in the cut metric, but it is lower semicontinuous, and this is all that we will need.

\begin{lemma}\label{lem:lsc}
If $\delta_\cut(W_k,W)\to0$ then $\Ent(W)\le\liminf_k\Ent(W_k)$. Consequently the minimum in \eqref{eq:Phi} is attained for every $\eta\in[0,1]$.
\end{lemma}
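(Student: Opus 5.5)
The plan is to prove the lower semicontinuity by expressing $\Ent$ as a supremum of cut-continuous functionals, and then to deduce attainment from compactness of $\widetilde\cW$.

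\emph{Step 1: convexity and the step-averaging inequality.} The function $h(x)=x\log x-x+1$ is convex and continuous on $[0,1]$. For a finite partition $\cP$ of $[0,1]$ into intervals, let $W_\cP$ be the stepping of $W$: on each block $I\times J$ it equals the average of $W$ over $I\times J$. Jensen's inequality on each block gives $\Ent(W_\cP)\le\Ent(W)$. By the martingale convergence theorem (Lebesgue differentiation), $W_{\cP_k}\to W$ almost everywhere along the dyadic partitions. Since $h$ is bounded and continuous, bounded convergence then gives $\Ent(W)=\sup_\cP\Ent(W_\cP)=\lim_k\Ent(W_{\cP_k})$.

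\emph{Step 2: pass to representatives converging in cut norm.} Suppose $\delta_\cut(W_k,W)\to0$. After relabeling each $W_k$ by a measure-preserving bijection, which leaves $\Ent$ unchanged, we may assume $\|W_k-W\|_\cut\to0$. More precisely, we choose $\phi_k$ with $\|W_k^{\phi_k}-W\|_\cut\le\delta_\cut(W_k,W)+1/k$.

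\emph{Step 3: cut convergence controls each stepping.} Fix a partition $\cP$ into $m$ intervals. Every block average of $W_k$ converges to the corresponding block average of $W$, because $|\int_{I\times J}(W_k-W)|\le\|W_k-W\|_\cut$. Hence $(W_k)_\cP\to W_\cP$ uniformly, and continuity of $h$ gives $\Ent((W_k)_\cP)\to\Ent(W_\cP)$. Combining this with Step 1,
\[
\Ent(W_\cP)=\lim_k\Ent((W_k)_\cP)\le\liminf_k\Ent(W_k).
\]
Taking the supremum over $\cP$ yields $\Ent(W)\le\liminf_k\Ent(W_k)$.

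\emph{Step 4: attainment of the minimum.} Fix $\eta\in[0,1]$. The feasible set $\{t(H,\cdot)\le\eta\}$ is nonempty, since it contains the zero graphon. It is also closed in the compact space $\widetilde\cW$, because $t(H,\cdot)$ is cut-continuous. Take a minimizing sequence; by compactness it has a cut-convergent subsequence, whose limit is feasible by closedness. Lower semicontinuity then shows the limit attains the infimum.

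The main technical point is Step 2, namely that $\Ent$ is invariant under relabeling, so that convergence in $\delta_\cut$ can be replaced by convergence in $\|\cdot\|_\cut$ for suitable representatives. This invariance is immediate, because measure-preserving maps preserve the integral of $h\circ W$. Everything else uses only Jensen's inequality and the standard martingale approximation of graphons by step functions from \cite[Chapters 8--9]{LovaszBook}.
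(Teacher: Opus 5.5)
Your proof is correct. The paper gives no proof of this lemma to compare against: it is recorded ``without proof'' as a standard fact, with a pointer to \cite[Chapters 8--9]{LovaszBook}. What you wrote is the standard argument that fills this in. Jensen's inequality on steppings, together with martingale convergence along dyadic partitions, exhibits $\Ent$ as the supremum of the functionals $W\mapsto\Ent(W_\cP)$. Each of these is continuous for $\|\cdot\|_\cut$, because block averages are. Attainment then follows from compactness of $\widetilde\cW$ and closedness of $\{t(H,\cdot)\le\eta\}$, which is nonempty since it contains the zero graphon.

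The relabeling in Step 2 is routine rather than the main technical point. The substance is the supremum representation in Steps 1 and 3.

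Your argument also has a small by-product worth noting. Applying it to the sequence $W_k=U$ for all $k$ gives $\Ent(W)\le\Ent(U)$ whenever $\delta_\cut(U,W)=0$, and by symmetry $\Ent(U)=\Ent(W)$. So you show directly that $\Ent$ is well defined on $\widetilde\cW$, without appealing to the characterization of weakly isomorphic graphons.
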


This implies the following nice properties of the full variational problem. 
\begin{lemma}\label{lem:Phi-basic}
The function $\Phi$ is continuous and strictly decreasing on $(0,1)$, and every $W\in\cM_q$ with $q\in(0,1)$ satisfies $t(H,W)=q^{e(H)}$.
\end{lemma}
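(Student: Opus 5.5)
We prove the three assertions of \cref{lem:Phi-basic} in the order: monotonicity, saturation, continuity. The main tools are the scaling identity $t(H,sW)=s^{e(H)}t(H,W)$, the continuity of $t(H,\cdot)$ and $\dens$ in $\delta_\cut$, and \cref{lem:lsc}.

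\emph{Monotonicity and saturation.} Fix $q\in(0,1)$ and $W\in\cM_q$. For $s\in(0,1)$, the graphon $sW$ is feasible for every $q'\ge sq$, since $t(H,sW)=s^{e(H)}t(H,W)\le (sq)^{e(H)}$. Pointwise we have
\[
\frac{d}{ds}h(sw)=w\log(sw)<0
\]
whenever $w>0$ and $s w<1$. Hence $\Ent(sW)$ is nonincreasing in $s$, and strictly decreasing unless $W=0$ almost everywhere. The case $W=0$ a.e.\ cannot occur, because $\Ent(0)=1>h(q)\ge\Phi(q)$.

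Now consider a minimizer $W\in\cM_q$ and suppose $t(H,W)<q^{e(H)}$. Then for $s>1$ slightly larger than $1$, the truncation $\min(sW,1)$ is still feasible by continuity of $t(H,\cdot)$ along this path. Its entropy is strictly smaller than $\Ent(W)$, since $h$ is strictly decreasing on $[0,1]$ and $\min(sW,1)>W$ on the positive-measure set where $0<W<1$. This contradicts minimality. The degenerate case where $W\in\{0,1\}$ almost everywhere needs separate care; there we instead raise $W$ on its zero set by a small constant. Thus $t(H,W)=q^{e(H)}$.

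For strict decrease, take $q'<q$ and $W\in\cM_{q'}$. Then $\Phi(q)\le\Ent(\min((q/q')W,1))<\Ent(W)=\Phi(q')$ by the same monotonicity of $h$. This is the delicate step: one must check that $\min(sW,1)$ keeps $t\le q^{e(H)}$, which holds since $t(H,\min(sW,1))\le s^{e(H)}t(H,W)$.

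\emph{Continuity.} Right-continuity from monotonicity is not automatic, so we argue both sides. For the upper bound, if $q_k\to q$ and $W\in\cM_q$, then the dilations $s_kW$ with $s_k=\min(1,q_k/q)$ are feasible at $q_k$. Since $\Ent$ is continuous along dilations by dominated convergence, this gives $\limsup_k\Phi(q_k)\le\Phi(q)$. For the lower bound, pick $W_k\in\cM_{q_k}$ and pass to a $\delta_\cut$-convergent subsequence $W_k\to W$. Continuity of $t(H,\cdot)$ shows $t(H,W)\le q^{e(H)}$, so $W$ is feasible at $q$. Then \cref{lem:lsc} gives $\Phi(q)\le\Ent(W)\le\liminf_k\Phi(q_k)$.

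The only real obstacle is handling the boundary effects when $W$ takes the value $1$ on a set of positive measure. Truncation at $1$ resolves this, since $h$ is strictly decreasing on $[0,1]$.
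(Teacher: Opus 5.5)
The paper records this lemma without proof (it is listed among the ``standard'' facts after \cref{lem:lsc}), so there is no argument in the paper to compare against. Your proof is the natural one, and it uses the same devices the paper uses elsewhere: truncated dilations $\min\{sW,1\}$ as in \cref{lem:semiconcave}, dilation plus bounded convergence as in the upper bound of \cref{prop:finite-limit}, and compactness plus \cref{lem:lsc} as in the coexistence part of \cref{thm:threshold}. The saturation and continuity parts are correct as written. For the liminf you should first pass to a subsequence realizing $\liminf_k\Phi(q_k)$ and only then extract a cut-convergent one, but that is routine.

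There is one step that fails as written: strict decrease. The inequality $\Ent(\min\{(q/q')W,1\})<\Ent(W)$ needs $\{0<W<1\}$ to have positive measure. If $W\in\cM_{q'}$ is $\{0,1\}$-valued a.e., then $\min\{(q/q')W,1\}=W$ and you only get $\Phi(q)\le\Phi(q')$. You flagged exactly this degenerate case in the saturation step but did not carry the fix over. You can repair it in either of two ways.
\begin{itemize}
\item Repeat the fix. Since $t(H,W)\le q'^{e(H)}<q^{e(H)}$, the graphon $W+\epsilon\mathbf 1_{\{W=0\}}$ is feasible at $q$ for small $\epsilon$. It is strictly cheaper because $\{W=0\}$ has positive measure, as $W\equiv1$ is infeasible.
\item More cleanly, derive strictness from saturation. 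The feasible sets are nested, so $\Phi$ is nonincreasing. If $\Phi(q)=\Phi(q')$ for some $q'<q$, then any $W\in\cM_{q'}$ is feasible at $q$ with $\Ent(W)=\Phi(q)$. Hence $W\in\cM_q$ while $t(H,W)\le q'^{e(H)}<q^{e(H)}$, contradicting saturation.
\end{itemize}
Relatedly, the single perturbation $(1-\epsilon)W+\epsilon$ handles the saturation step with no case split. It exceeds $W$ wherever $W<1$, and it moves $t(H,\cdot)$ by $O(\epsilon)$ since it is within $\epsilon$ of $W$ in $L^1$. Your opening remarks on $\Ent(sW)$ for $s<1$ are not used anywhere and can be dropped.
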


Finally, the following is an elementary observation which will be used repeatedly.

\begin{lemma}\label{lem:constants}
Let $q\in(0,1)$. A constant graphon $W\equiv c$ is feasible for $\Phi(q)$ if and only if $c\le q$, and in that case $\Ent(W)=h(c)\ge h(q)$, with equality if and only if $c=q$. Consequently, if $\Phi(q)<h(q)$ then no minimizer at $q$ is constant, while if $\Phi(q)=h(q)$ then the only constant minimizer is $W\equiv q$.
\end{lemma}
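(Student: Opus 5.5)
The statement is elementary; I will argue directly from the definitions and the strict monotonicity of $h$ on $(0,1]$.

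\emph{Feasibility.} For the constant graphon $W\equiv c$ we have $t(H,W)=c^{e(H)}$, since the integrand in the definition of $t(H,\cdot)$ is the constant $c^{e(H)}$. Because $x\mapsto x^{e(H)}$ is strictly increasing on $[0,1]$, the constraint $c^{e(H)}\le q^{e(H)}$ holds if and only if $c\le q$.

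\emph{Entropy cost.} Clearly $\Ent(W)=h(c)$. On $(0,1)$ we have $h'(x)=\log x<0$. Moreover, $h$ is continuous at $0$ with $h(0)=1$, since $x\log x\to0$. Hence $h$ is strictly decreasing on $[0,1]$, and for $0\le c\le q<1$ this gives $h(c)\ge h(q)$, with equality if and only if $c=q$.

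\emph{Consequences.} Suppose $\Phi(q)<h(q)$ and some minimizer were a constant $c$. Then $c\le q$ by feasibility, so $\Phi(q)=h(c)\ge h(q)>\Phi(q)$, a contradiction; hence no minimizer is constant. Suppose instead $\Phi(q)=h(q)$. The constant $q$ is feasible with cost $h(q)=\Phi(q)$, so it is a minimizer. Any constant minimizer $c$ is feasible, so $c\le q$, and it satisfies $h(c)=\Phi(q)=h(q)$; the equality case above then forces $c=q$.

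The only point needing any care is strict monotonicity of $h$ up to the endpoint $0$. This is handled by continuity together with the negative derivative on $(0,1)$.
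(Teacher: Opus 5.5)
Your proposal is correct. The paper records this lemma as an elementary observation without proof, and your argument supplies the intended verification completely: feasibility follows from $t(H,c)=c^{e(H)}$, the cost comparison follows from $h$ being strictly decreasing on $[0,1]$ (since $h'(x)=\log x<0$ on $(0,1)$ and $h$ is continuous at $0$), and the two consequences then follow directly.
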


\subsection{The finite mean field problem and its sparse limit}
For completeness, we provide a proof that the sparse limit of the variational problem is rigorously tied to the finite dimensional mean-field approximation used by Kozma and Samotij \cite{KozmaSamotij}.

For $p\in(0,1)$ let $\ip(x)=x\log(x/p)+(1-x)\log\bigl((1-x)/(1-p)\bigr)$ and, for $\eta\ge0$,
\begin{equation}\label{eq:Psi}
  \Phi_p(H,\eta):=\min\Bigl\{\sum_{e\in\binom{[n]}2}\ip(x_e)\ :\ \E[N_H(\mathbf x)]\le\eta\,\E[N_H(p)]\Bigr\},
\end{equation}
the minimum over $\mathbf x\in[0,1]^{\binom{[n]}2}$, as in \cite{ChinStructure}. Here $N_H(\mathbf x)$ denotes the number of copies of $H$ in the random graph containing each edge $e$ independently with probability $x_e$, so that
\[
  \E[N_H(\mathbf x)]=\sum_{A\in\cH(H)}\prod_{e\in A}x_e,
\]
where $\cH(H)$ is the $e(H)$-uniform hypergraph with vertex set $\binom{[n]}2$ whose edges are the copies of $H$ in $K_n$, each identified with its edge set. Recall also $\Phi(H,\eta)$ from \eqref{eq:Phi}, so that $\Phi(H,\eta)=\Phi(\eta^{1/e(H)})$ for $\eta\in[0,1]$ and $\Phi(H,\eta)=0$ for $\eta\ge1$. Given $\mathbf x\in[0,1]^{\binom{[n]}2}$ let $W_{\mathbf x}$ be the step kernel with $n$ equal parts $I_i=\bigl((i-1)/n,\,i/n\bigr]$ taking the value $x_{ij}$ on $I_i\times I_j$ for $i\ne j$ and the value $0$ on the diagonal blocks. Two relations will be used repeatedly:
\begin{equation}\label{eq:step-identities}
  \Bigl|t(H,W_{\mathbf x})-\frac{\E[N_H(\mathbf x)]}{N_H}\Bigr|\le\frac{\binom{v(H)}2}n,
  \qquad
  \Ent(W_{\mathbf x})=\Bigl(1-\frac1n\Bigr)\binom n2^{-1}\!\!\sum_{e\in \binom{[n]}2}\!\!h(x_e)\ +\ \frac1n .
\end{equation}
For the first, write $t(H,W_{\mathbf x})$ as the average over all maps $V(H)\to[n]$ of the corresponding products of entries of $\mathbf x$, with the convention that the diagonal entries vanish. Each copy of $H$ arises from exactly $|\mathrm{Aut}(H)|$ injective maps, so the injective maps contribute $\prod_{i<v(H)}(1-i/n)$ times $\E[N_H(\mathbf x)]/N_H\in[0,1]$, while the noninjective maps form a proportion at most $\binom{v(H)}2/n$ of all maps and contribute a number in $[0,\binom{v(H)}2/n]$. The second is immediate from $h(0)=1$.

\begin{proposition}\label{prop:finite-limit}
Let $p_n\to0$. For every $\eta\in(0,1)$,
\begin{equation}\label{eq:finite-limit}
  \frac{\Phi_{p_n}(H,\eta)}{\binom n2p_n}\longrightarrow \Phi(H,\eta).
\end{equation}
Moreover, the lower bound is stable under the following cut-closed restriction: if $\eta_n\to\eta\in(0,1)$, if $\varepsilon>0$, and if $\cK\subseteq\widetilde\cW$ is closed, then
\begin{multline}\label{eq:restricted-liminf}
  \liminf_{n\to\infty}\frac1{\binom n2p_n}
  \inf\Bigl\{\sum_e\ip(x_e)\ :\ \mathbf x\in[0,p_n]^{\binom{[n]}2},\\
  \E[N_H(\mathbf x)]\le\eta_n\E[N_H(p_n)],\
  \dist_\cut\bigl(W_{\mathbf x/p_n},\cK\bigr)\ge\varepsilon\Bigr\}
\end{multline}
is at least $\inf\{\Ent(W):t(H,W)\le\eta,\ \dist_\cut(W,\cK)\ge\varepsilon\}$. Here $\dist_\cut(\cdot,\cK):=\inf_{V\in\cK}\delta_\cut(\cdot,V)$ and $\inf\varnothing:=+\infty$.
\end{proposition}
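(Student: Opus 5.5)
The plan is to prove the upper bound and the lower bound in \eqref{finite-limit} separately, using the step-kernel identities \eqref{eq:step-identities} in both directions, and then to run the lower bound argument again with the extra cut-distance constraint to obtain \eqref{eq:restricted-liminf}.

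\emph{Upper bound.} Fix $\delta>0$. By \cref{lem:Phi-basic}, $\Phi$ is continuous, so we may pick $\eta'<\eta$ with $\Phi(H,\eta')\le\Phi(H,\eta)+\delta$, and a minimizer $W$ for $\eta'$. Approximate $W$ by a block graphon $W_k$ with $k$ equal parts, obtained by averaging $W$ on blocks. By Jensen (convexity of $h$), $\Ent(W_k)\le\Ent(W)$. Since $W_k\to W$ in $L^1$, we have $t(H,W_k)\to t(H,W)$, so for $k$ large, $t(H,W_k)\le(\eta+\eta')/2$. Now set $x_e=p_nW_k(i/n,j/n)$, with a negligible correction at block boundaries. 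Then:
\begin{itemize}
\item By \eqref{eq:step-identities}, $\E[N_H(\mathbf x)]/(N_Hp_n^{e(H)})=t(H,W_k)+O(1/n)\le\eta$ for large $n$.
\item Since $p^{-1}\ip(px)\to h(x)$ uniformly on $[0,1]$, we get $\sum_e\ip(x_e)/(\binom n2p_n)\to\Ent(W_k)$, using the Riemann-sum convergence for the step function.
\end{itemize}
This gives $\limsup\le\Phi(H,\eta)+\delta$.

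\emph{Lower bound.} Take near-optimal $\mathbf x^{(n)}$.
\begin{itemize}
\item \emph{Truncation.} Replace $x_e$ by $\min(x_e,p_n)$. This does not increase $\ip$, because $\ip$ is increasing on $[p,1]$ and $\ip(p)=0$. It also does not increase $\E[N_H]$, so feasibility is preserved. Hence we may assume $\mathbf x\in[0,p_n]^{\binom{[n]}2}$; this is exactly the setting of \eqref{eq:restricted-liminf}.
\item \emph{Rescaling.} Put $W_n:=W_{\mathbf x/p_n}\in\cW$. By uniform convergence of $p^{-1}\ip(p\,\cdot)$ and \eqref{eq:step-identities}, we get $\Ent(W_n)\le(\binom n2p_n)^{-1}\sum_e\ip(x_e)+o(1)$ and $t(H,W_n)\le\eta_n+O(1/n)$.
\item \emph{Compactness.} Pass to a subsequence realizing the liminf and converging in $\widetilde\cW$ to some $W$. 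Continuity of $t(H,\cdot)$ gives $t(H,W)\le\eta$, and \cref{lem:lsc} gives $\Ent(W)\le\liminf$.
\end{itemize}
This yields $\liminf\ge\Phi(H,\eta)$. For the restricted version, note that $\dist_\cut(\cdot,\cK)$ is continuous (1-Lipschitz) on $\widetilde\cW$. Hence $\dist_\cut(W_n,\cK)\ge\varepsilon$ passes to the limit, $\dist_\cut(W,\cK)\ge\varepsilon$, and the same argument gives the claimed infimum. If the constraint set is eventually empty the bound is trivial.

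\emph{Main obstacle.} The delicate point is the quantitative step $\ip(p x)=p\,h(x)+o(p)$ uniformly in $x\in[0,1]$, together with the zero diagonal blocks of $W_{\mathbf x}$. Both matter in the upper bound, where the constructed $\mathbf x$ must genuinely satisfy the constraint for every large $n$. I would handle this by building a strict slack $\eta'<\eta$ into the construction and absorbing all $O(1/n)$ and $o(1)$ errors into it. In the upper bound one must also check that $h(0)=1$ creates no boundary issue; since $h$ is bounded and continuous on $[0,1]$, uniform convergence covers it.
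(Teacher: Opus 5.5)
Your proposal is correct and follows essentially the same route as the paper. For the lower bound, including the restricted version, both arguments truncate to $[0,p_n]$, use the uniform expansion $\ip(pw)=p\,h(w)+o(p)$ together with the step identities, and then invoke cut compactness, lower semicontinuity of $\Ent$, and the $1$-Lipschitz property of $\dist_\cut(\cdot,\cK)$.

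The only difference is cosmetic and lies in the upper bound. You get strict slack from left-continuity of $\Phi(H,\cdot)$ and a minimizer at some $\eta'<\eta$, then average at a fixed scale $k$ and sample on the $n$-grid. The paper instead gets slack from the dilation $sW^*$, averages $W$ directly over the $n$ intervals, and uses Jensen's inequality for the cost.
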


\begin{proof}
We first record two elementary facts. Since $\ip$ is decreasing on $[0,p]$ and increasing on $[p,1]$, and since decreasing a coordinate decreases the polynomial in \eqref{eq:Psi}, a minimizer of \eqref{eq:Psi} may be assumed to satisfy $x_e\le p$ for every $e$. Second, writing $x=pw$ with $w\in[0,1]$,
\begin{equation}\label{eq:ip-expansion}
  \ip(pw)=pw\log w+(1-pw)\bigl(\log(1-pw)-\log(1-p)\bigr)=p\,h(w)+O(p^2)
\end{equation}
uniformly in $w\in[0,1]$, because $\log(1-pw)-\log(1-p)=p(1-w)+O(p^2)$ uniformly.

We first prove the lower bound, including \eqref{eq:restricted-liminf}. Pass to a subsequence realizing the lower limit. If that lower limit is $+\infty$, the claim is immediate; otherwise the restricted feasible set is nonempty for all sufficiently large indices of this subsequence. Let $\mathbf x^{(n)}\in[0,p_n]^{\binom{[n]}2}$ satisfy the constraints in \eqref{eq:restricted-liminf} and be within $o(\binom n2p_n)$ of the infimum, and set $\mathbf w^{(n)}=\mathbf x^{(n)}/p_n$ and $U_n:=W_{\mathbf w^{(n)}}$. By \eqref{eq:ip-expansion} and \eqref{eq:step-identities},
\begin{equation}\label{eq:cost-vs-entropy}
  \frac1{\binom n2p_n}\sum_e\ip(x^{(n)}_e)=\binom n2^{-1}\sum_eh(w^{(n)}_e)+O(p_n)=\Ent(U_n)+O(n^{-1})+O(p_n).
\end{equation}
By \eqref{eq:step-identities} the constraint gives $t(H,U_n)\le\eta_n+\binom{v(H)}2/n$. By compactness, pass to a further subsequence along which $U_n\to U$ in cut distance. Then $t(H,U)\le\eta$ by continuity, $\dist_\cut(U,\cK)\ge\varepsilon$ because $\dist_\cut(\cdot,\cK)$ is $1$-Lipschitz for $\delta_\cut$, and $\Ent(U)\le\liminf_n\Ent(U_n)$ by \cref{lem:lsc}. Combining with \eqref{eq:cost-vs-entropy} gives the claim. Taking $\cK=\varnothing$, so that the last constraint is vacuous, gives the lower bound in \eqref{eq:finite-limit}.

We now prove the upper bound in \eqref{eq:finite-limit}. Fix $\varepsilon>0$. Let $W^*$ attain $\Phi(H,\eta)$. For $s\in(0,1)$ the graphon $sW^*$ satisfies $t(H,sW^*)=s^{e(H)}t(H,W^*)<\eta$, and $\Ent(sW^*)\to\Ent(W^*)$ as $s\uparrow1$ by bounded convergence; fix $s$ so close to $1$ that $W:=sW^*$ has $\Ent(W)\le \Phi(H,\eta)+\varepsilon$ and $t(H,W)<\eta$. Let $\cP_n$ be the partition of $[0,1]$ into the $n$ intervals $I_i$ and let $x_e:=p_n\cdot(\text{average of }W\text{ over the block of }e)$, so that $W_{\mathbf x/p_n}$ agrees with $W_{\cP_n}$ off the diagonal blocks and vanishes on them. Then $W_{\mathbf x/p_n}\le W_{\cP_n}$ pointwise, so
\[
  t(H,W_{\mathbf x/p_n})\le t(H,W_{\cP_n})\xrightarrow[n\to\infty]{}t(H,W)<\eta ,
\]
using standard equal-grid approximation, which gives $W_{\cP_n}\to W$ in $L^1$, and the counting lemma. By \eqref{eq:step-identities} the vector $\mathbf x$ is therefore feasible for $\Phi_{p_n}(H,\eta)$ once $n$ is large. Its cost is, by \eqref{eq:ip-expansion}, \eqref{eq:step-identities} and Jensen's inequality,
\begin{align*}
  \sum_e\ip(x_e)&=\binom n2p_n\Bigl(\binom n2^{-1}\sum_eh\bigl(x_e/p_n\bigr)+O(p_n)\Bigr)\\
  &\le\binom n2p_n\Bigl(\tfrac{n}{n-1}\Ent(W_{\cP_n})+O(p_n)\Bigr)
  \ \le\ \binom n2p_n\bigl(\Ent(W)+o(1)\bigr).
\end{align*}
Hence $\limsup_n\Phi_{p_n}(H,\eta)/(\binom n2p_n)\le \Phi(H,\eta)+\varepsilon$, and $\varepsilon>0$ was arbitrary.
\end{proof}

\subsection{The relative entropy input}

Finally, in this subsection we collect a series of results from the work of Kozma and Samotij \cite{KozmaSamotij}, specialized to copies of $H$. The hypergraph $\cH(H)$ satisfies the degree condition of \cite[\S7]{KozmaSamotij} as soon as $p\ge Ln^{-1/m_2(H)}$ for a large constant $L$. Let $Y=(Y_e)_{e\in \binom{[n]}2}$ be the edge-indicator vector of a random graph and, for an edge set $A$, write $Y_A:=\prod_{e\in A}Y_e$. For a set $S\subseteq \binom{[n]}2$, $b\in B\subseteq A\in\cH(H)$ put
\begin{equation*}
  D_S(B,b):=\E\bigl[Y_B\mid Y_S\bigr]-\E\bigl[Y_{B\setminus\{b\}}\mid Y_S\bigr]\E\bigl[Y_b\mid Y_S\bigr],
\end{equation*}
where $Y_S$ abbreviates $(Y_e)_{e\in S}$, and set
\begin{equation*}
  \cE_H(S):=\E\Biggl[\ \sum_{A\cap S=\varnothing}\ \sum_{\substack{B\subseteq A\\|B|\ge2}}\ \sum_{b\in B}\frac{D_S(B,b)^2}{\binom{e(H)}{|B|}|B|\,p^{2|B|}}\Biggr].
\end{equation*}

\begin{theorem}[Kozma--Samotij \cite{KozmaSamotij}]\label{thm:KS}
Fix $p_0\in(0,1)$. In parts \ref{it:KS-harris}--\ref{it:KS-increment}, let $Y$ be distributed as $G(n,p)$ conditioned on a decreasing event.
\begin{enumerate}[label=\textup{(\alph*)},leftmargin=*]
  \item\label{it:KS-tail} For every $\varepsilon\in(0,1)$ there is $L=L(\varepsilon,p_0)$ such that if $Ln^{-1/m_2(H)}\le p\le p_0$ and $\eta\in[0,1]$ then
  \begin{equation}\label{eq:KS-tail}
    (1-\varepsilon)\Phi_p(H,\eta+\varepsilon)\ \le\ -\log\Pr\bigl(\cL_p(H,\eta)\bigr)\ \le\ (1+\varepsilon)\Phi_p\bigl(H,(1-\varepsilon)\eta\bigr).
  \end{equation}
  \item\label{it:KS-harris} For disjoint edge sets $A,S$, one has $\E[Y_A\mid Y_S]\le p^{|A|}$ almost surely.
  \item\label{it:KS-CS} With the notation above,
  \begin{equation}\label{eq:KS-CS}
    \E\sum_{A\cap S=\varnothing}\Bigl|\E[Y_A\mid Y_S]-\prod_{e\in A}\E[Y_e\mid Y_S]\Bigr|\ \le\ p^{e(H)}\bigl((e(H)-1)N_H\bigr)^{1/2}\cE_H(S)^{1/2}.
  \end{equation}
  \item\label{it:KS-increment} For all $\alpha,\beta>0$ there is $L=L(\alpha,\beta)$ such that if $p\ge Ln^{-1/m_2(H)}$ and $n$ is large, then there exists $S\subseteq \binom{[n]}2$ with $|S|\le\alpha\binom n2$ and $\cE_H(S)\le\beta N_H$.
\end{enumerate}
\end{theorem}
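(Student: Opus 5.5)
The statement is a specialization of results of Kozma and Samotij \cite{KozmaSamotij}, so the plan is to check that their hypotheses hold for the $e(H)$-uniform hypergraph $\cH(H)$ on $\binom{[n]}2$, and then to read off each part. For \ref{it:KS-tail}, I would first verify the degree condition of \cite[\S7]{KozmaSamotij}. For $B\subseteq \binom{[n]}2$ spanning a subgraph $F\subseteq H$ with $e(F)=|B|\ge 1$, the number of edges of $\cH(H)$ containing $B$ is $O(n^{v(H)-v(F)})$. The condition therefore amounts to
\[
  n^{v(H)-v(F)}p^{e(H)-e(F)}\ \ll\ n^{v(H)-2}p^{e(H)-1}
  \qquad\text{for every } F\subseteq H \text{ with } e(F)\ge 2 ,
\]
that is, $p^{e(F)-1}n^{v(F)-2}\gg1$, which is exactly $p\ge Ln^{-1/m_2(H)}$ with $L$ large. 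Their general lower tail theorem then gives \eqref{eq:KS-tail}. The $\varepsilon$-shifts in $\eta$ and the factors $(1\pm\varepsilon)$ absorb the error terms of their mean field comparison, namely the rounding of $\E[N_H(\mathbf x)]$ and the slack in the degree condition.

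Part \ref{it:KS-harris} I would prove directly. Fix a value $y_S$ of $Y_S$ with positive probability. Conditional on $Y_S=y_S$, the remaining edges $Y_{S^c}$ are distributed as a product $\Ber(p)$ measure conditioned on the section of the decreasing event at $y_S$, and this section is again decreasing. Since $A\cap S=\varnothing$, the variable $Y_A$ is increasing in $Y_{S^c}$. Harris's inequality then gives $\E[Y_A\mid Y_S=y_S]\le \E_{\Ber(p)}[Y_A]=p^{|A|}$.

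For \ref{it:KS-CS}, I would telescope. Order the edges of $A$ as $b_1,\dots,b_k$ with $k=e(H)$, and set $B_j=\{b_j,\dots,b_k\}$. Then
\[
  \E[Y_A\mid Y_S]-\prod_{e\in A}\E[Y_e\mid Y_S]
  =\sum_{j}\Bigl(\prod_{i<j}\E[Y_{b_i}\mid Y_S]\Bigr)D_S(B_j,b_j).
\]
By \ref{it:KS-harris} each prefactor is at most $p^{j-1}$. Averaging over all orderings produces the weights $\binom{e(H)}{|B|}^{-1}|B|^{-1}$. Cauchy--Schwarz over the pairs $(A,B,b)$ with $|B|\ge2$ then gives the bound $p^{e(H)}\bigl((e(H)-1)N_H\bigr)^{1/2}\cE_H(S)^{1/2}$. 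Here the factor $e(H)-1$ counts the terms with $|B|\ge2$ contributed by each $A$, and the terms with $|B|=1$ vanish.

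Part \ref{it:KS-increment} is the entropy increment argument, and I expect it to be the main obstacle. The plan is to build $S$ randomly in rounds, revealing batches of about $\alpha\binom n2/T$ uniformly random edges each time. In each round, the decrease in conditional entropy of $Y$, measured as relative entropy to $\Ber(p)^{\otimes}$, is bounded below by a constant times $p\,n^{-2}\cE_H(S_t)\cdot n^2$. This lower bound comes from a second-moment (Pinsker-type) estimate on how much revealing a random edge $b$ reduces the variance of $Y_{B\setminus b}$, and it uses the degree condition. The total relative entropy is at most $-\log\Pr(\text{event})=O(n^2p)$ by \ref{it:KS-tail}. Pigeonholing over $T\approx C/(\alpha\beta)$ rounds then yields some round with $\cE_H(S_t)\le\beta N_H$ and $|S_t|\le\alpha\binom n2$. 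The delicate point is the normalization: each per-round entropy gain must be comparable to $n^2p$ times $\cE_H/N_H$, which is where $p\ge Ln^{-1/m_2(H)}$ enters. I would follow \cite{KozmaSamotij} verbatim at this step.
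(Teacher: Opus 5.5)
Your proposal matches the paper, which proves nothing new here and simply cites \cite{KozmaSamotij} for all four parts after noting that $\cH(H)$ meets their degree condition once $p\ge Ln^{-1/m_2(H)}$ (your computation $n^{v(F)-2}p^{e(F)-1}\gg1$ is exactly that check); your Harris argument for (b) and your ordering-averaged telescoping with Cauchy--Schwarz for (c), with Jensen for the outer expectation, are the arguments behind \cite[Claim 16]{KozmaSamotij} and \cite[p.~686]{KozmaSamotij}. One fix is needed in your sketch of (d): the entropy budget cannot come from (a), which covers only the events $\cL_p(H,\eta)$, whereas (d) concerns an arbitrary decreasing event $D$; instead use that $D$ contains the empty graph, so $-\log\Pr(D)\le\binom n2\log\frac1{1-p}=O(n^2p)$ for $p\le p_0$.
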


Part \ref{it:KS-tail} is \cite[Theorem 2]{KozmaSamotij}; part \ref{it:KS-harris} is \cite[Claim 16]{KozmaSamotij}, an application of Harris's inequality; part \ref{it:KS-CS} is the Cauchy--Schwarz estimate of \cite[p.~686]{KozmaSamotij}; and part \ref{it:KS-increment} is \cite[Lemma 18]{KozmaSamotij} combined with \cite[\S7]{KozmaSamotij}. All four are recorded in this generality in \cite[\S3]{ChinStructure}. These tools relate the lower tail rate function to the variational problem in the supercritical regime as follows. 

\begin{corollary}\label{cor:ldp}
If $p_n\to0$ and $p_nn^{1/m_2(H)}\to\infty$, then for every $q\in(0,1)$,
\begin{equation*}
  -\frac1{\binom n2p_n}\log\Pr\bigl(\cL_p(H,q^{e(H)})\bigr)\longrightarrow\Phi(q).
\end{equation*}
\end{corollary}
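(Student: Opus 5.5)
The plan is to combine Kozma--Samotij's tail estimate \cref{thm:KS}\ref{it:KS-tail} with the finite-to-limit comparison \cref{prop:finite-limit} and the continuity of $\Phi$ from \cref{lem:Phi-basic}. Write $\eta=q^{e(H)}\in(0,1)$ and $N_n:=\binom n2p_n$. The goal is to show that $\liminf_n$ and $\limsup_n$ of $-N_n^{-1}\log\Pr(\cL_{p_n}(H,\eta))$ are both pinched to $\Phi(H,\eta)=\Phi(q)$.

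Fix $\varepsilon\in(0,1)$ small enough that $\eta+\varepsilon<1$. Since $p_n\to0$ and $p_nn^{1/m_2(H)}\to\infty$, for all large $n$ we have $L(\varepsilon,p_0)n^{-1/m_2(H)}\le p_n\le p_0$, with, say, $p_0=1/2$. Then \eqref{eq:KS-tail} gives
\[
(1-\varepsilon)\frac{\Phi_{p_n}(H,\eta+\varepsilon)}{N_n}\le-\frac{\log\Pr(\cL_{p_n}(H,\eta))}{N_n}\le(1+\varepsilon)\frac{\Phi_{p_n}(H,(1-\varepsilon)\eta)}{N_n}.
\]
Both $\eta+\varepsilon$ and $(1-\varepsilon)\eta$ are fixed points of $(0,1)$, so \eqref{eq:finite-limit} applies to each. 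Hence the $\liminf$ is at least $(1-\varepsilon)\Phi(H,\eta+\varepsilon)$, and the $\limsup$ is at most $(1+\varepsilon)\Phi(H,(1-\varepsilon)\eta)$.

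Finally, let $\varepsilon\downarrow0$. The function $\eta'\mapsto\Phi(H,\eta')=\Phi(\eta'^{1/e(H)})$ is continuous on $(0,1)$ by \cref{lem:Phi-basic}, being a composition of continuous maps. Both bounds therefore converge to $\Phi(H,\eta)=\Phi(q)$, which proves the corollary.

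There is no real obstacle here. The only points to check are that the hypotheses of \cref{thm:KS}\ref{it:KS-tail} hold eventually, which they do because $p_n n^{1/m_2(H)}\to\infty$ beats any fixed $L$, and that the perturbed parameters stay inside $(0,1)$, where \cref{prop:finite-limit} and continuity are available. The upper perturbation $\eta+\varepsilon$ is handled by choosing $\varepsilon<1-\eta$.
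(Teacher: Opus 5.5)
Your proposal is correct and follows the paper's proof essentially line for line. You fix $\varepsilon<1-q^{e(H)}$ and sandwich the log-probability with \cref{thm:KS}\ref{it:KS-tail}, then pass to the limit at the two fixed perturbed parameters via \cref{prop:finite-limit}, and finally let $\varepsilon\downarrow0$ using the continuity of $\Phi(H,\cdot)$ from \cref{lem:Phi-basic}.
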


\begin{proof}
Fix $\varepsilon\in(0,1-q^{e(H)})$. Since $p_nn^{1/m_2(H)}\to\infty$, \cref{thm:KS}\ref{it:KS-tail} applies with parameter $\varepsilon$ for all large $n$. Dividing \eqref{eq:KS-tail} by $\binom n2p_n$ and applying \cref{prop:finite-limit} gives
\begin{align*}
  (1-\varepsilon)\Phi(H,q^{e(H)}+\varepsilon)
  &\le\liminf_n\frac{-\log\Pr(\cL_p(H,q^{e(H)}))}{\binom n2p_n}\\
  &\le\limsup_n\frac{-\log\Pr(\cL_p(H,q^{e(H)}))}{\binom n2p_n}\le(1+\varepsilon)\Phi\bigl(H,(1-\varepsilon)q^{e(H)}\bigr).
\end{align*}
Now let $\varepsilon\downarrow0$ and use continuity of $\Phi(H,\cdot)$, which is \cref{lem:Phi-basic}.
\end{proof}

\section{Local minimizer at the constant graphon}\label{sec:coercivity}

In this section we prove the deterministic core of the paper, which is a quantitative statement to the effect that the constant graphon is a strict local minimum of \eqref{eq:Phi} in a scale-invariant sense.

The only property of $H$ that enters this section is the following approximate form of Sidorenko's inequality, which bounds the deficit of $t(H,W)$ below $\dens(W)^{e(H)}$ by a cubic $L^2$ error. The expansion of subgraph densities around a constant was used by Sidorenko \cite{Sidorenko} and later by Lov\'asz \cite{LovaszSigned}, and a cubic $L^2$ error of this kind was observed in a similar setting by Radin and Sadun \cite[p.~860]{RadinSadun}. For general $H$ our proof follows the star expansion of Kenyon, Radin, Ren, and Sadun \cite{KenyonRadinRenSadunBipodal}.

\begin{lemma}\label{lem:approx-sidorenko}
For every $W\in\cW$,
\begin{equation}\label{eq:approx-sidorenko}
  \dens(W)^{e(H)}-t(H,W)\ \le\ c_H\,\|W-\dens(W)\|_2^3,
\end{equation}
where $c_H:=2^{e(H)}$ if $H\neq K_3$ and $c_{K_3}:=1$.
\end{lemma}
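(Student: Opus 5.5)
Write $\mu:=\dens(W)$ and $U:=W-\mu$, so that $\int U=0$ and $|U|\le1$ pointwise. Expanding each factor $W_{x_ix_j}=\mu+U_{x_ix_j}$ and multiplying out gives
\[
  t(H,W)=\sum_{F\subseteq E(H)}\mu^{e(H)-|F|}\,t(F,U),
\]
where $t(F,U)$ denotes the homomorphism density of the spanning subgraph with edge set $F$, evaluated on $U$. This is the signed expansion of Sidorenko and Lov\'asz.

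The terms are handled by the size of $F$.
\begin{itemize}
  \item The $F=\varnothing$ term is $\mu^{e(H)}$.
  \item Every $F$ with $|F|=1$ gives $\int U=0$.
  \item For $|F|=2$: if the two edges are disjoint, the density factorizes into $(\int U)^2=0$. If they share a vertex, it equals $t(P_2,U)=\int(\int U(x,y)\dd y)^2\dd x\ge0$.
\end{itemize}
So the second-order contribution is $\mu^{e(H)-2}\,s_2(H)\,t(P_2,U)\ge0$, where $s_2(H)$ counts pairs of adjacent edges.

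For $|F|\ge3$, I would bound every term by $|t(F,U)|\le\|U\|_2^3$. If $F$ contains a connected component with at least $3$ edges, or two components with at least $2$ edges each, we can pick three edges of $F$ and bound the remaining factors by $1$ using $|U|\le1$. The resulting three-edge density is controlled by a Cauchy--Schwarz / operator-norm bound, e.g.\ $|t(P_3,U)|,|t(K_3,U)|\le\|U\|_{\mathrm{op}}\|U\|_2^2\le\|U\|_2^3$.

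The delicate cases are those $F$ with isolated edges, which vanish, and those that are stars. A component that is a single edge contributes a factor $\int U=0$, so those terms vanish. What remains are forests of stars and general components. For components that are stars $K_{1,k}$ with $k\ge3$, I would use
\[
  t(K_{1,k},U)=\int d_U(x)^k\dd x,\qquad d_U(x):=\int U(x,y)\dd y,\quad |d_U|\le1.
\]
This gives $|t(K_{1,k},U)|\le\int d_U^2\le\|U\|_2^2$, which is only quadratic. This is the main obstacle, and it is why the paper mentions grouping the star terms, following Kenyon--Radin--Ren--Sadun.

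My plan for the obstacle is to group all terms in which $F$ is a star centered at a vertex $v$ of degree $d_v$. Summing over subsets of the edges at $v$ gives
\[
  \sum_v\int\Bigl[(\mu+d_U(x))^{d_v}-\mu^{d_v}-d_v\mu^{d_v-1}d_U(x)\Bigr]\mu^{e(H)-d_v}\dd x .
\]
(Edge stars $k=1$ are excluded since they vanish, and stars centered at two endpoints of a single edge must be counted carefully.) Each integrand is $\ge0$ by convexity of $y\mapsto y^{d_v}$ on $[0,1]$, because $\mu+d_U(x)=\int W(x,y)\dd y\in[0,1]$. So the star terms contribute a nonnegative amount, just like the $P_2$ term, which is in fact part of this grouping. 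Every other $F$ with $|F|\ge3$ and no single-edge component either contains a non-star connected component with at least $3$ edges (a path $P_3$, a triangle, etc.) or has two star components each with at least $2$ edges. In the second case the density factorizes, and each factor satisfies $|t(K_{1,k},U)|\le\|U\|_2^2$, giving a quartic, hence smaller, bound $\le\|U\|_2^3$. The first case contains $P_3$ or $K_3$ and is bounded as above.

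Counting at most $2^{e(H)}$ subsets $F$ gives
\[
  \mu^{e(H)}-t(H,W)\le2^{e(H)}\|U\|_2^3 .
\]
For $K_3$ the exact identity $t(K_3,W)-\mu^3=3\mu\,t(P_2,U)+t(K_3,U)$ from the overview, together with $|t(K_3,U)|\le\|U\|_2^3$, gives $c_{K_3}=1$.
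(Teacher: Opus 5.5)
Your proposal is correct and follows essentially the same route as the paper: you use the signed expansion $t(H,W)=\sum_F\mu^{e(H)-|F|}t(F,U)$, you group the star terms at each vertex into $\mu^{e(H)-d_v}\bigl(\int(\mu+d_U)^{d_v}-\mu^{d_v}\bigr)\ge0$ by convexity, and you bound every remaining nonvanishing $F$ through a three-edge subgraph ($P_3$ or $K_3$) or a pair of disjoint star components. The differences are cosmetic: you use $\|U\|_{\mathrm{op}}\le\|U\|_2$ where the paper cites Lov\'asz's generalized Cauchy--Schwarz, and you get a quartic bound for disjoint stars where the paper extracts $P_2\sqcup K_2$; note only that after discarding the extra factors you must apply the three-edge bound to $|U|$ rather than $U$, which works identically.
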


\begin{proof}
Put $\mu:=\dens(W)$ and $V:=W-\mu$, so that $\int V=0$ and $|V|\le1$. For $F\subseteq E(H)$ write
\[
  t(F,V):=\int_{[0,1]^{V(H)}}\prod_{ij\in F}V(x_i,x_j)\prod_{i\in V(H)}\dd x_i .
\]
Expanding the product in the definition of $t(H,\mu+V)$, as in \cite[Lemma 5.1]{KenyonRadinRenSadunBipodal}, gives
\begin{equation}\label{eq:star-expansion}
  t(H,W)=\sum_{F\subseteq E(H)}\mu^{e(H)-|F|}\,t(F,V).
\end{equation}
The term $F=\varnothing$ is $\mu^{e(H)}$, and every $F$ containing an edge which shares no vertex with the other edges of $F$ contributes zero, since the integral then factors and $\int V=0$.

Suppose first that $H=K_3$. The remaining terms of \eqref{eq:star-expansion} come from the three pairs of edges, which each share a vertex and therefore each contribute $t(P_2,V)=\int_0^1\bigl(\int_0^1V(x,y)\dd y\bigr)^2\dd x\ge0$, and from $F=E(K_3)$, so that
\begin{equation}\label{eq:triangle-expansion}
  t(K_3,W)-\mu^3=3\mu\,t(P_2,V)+t(K_3,V).
\end{equation}
For the cubic term we use the bound
\begin{equation*}
  |t(K_3,V)|\le\|V\|_2^3,
\end{equation*}
which follows from \cite[Proposition 3.1]{ZhaoLowerTail} applied to $\pm V$. Since $t(P_2,V)\ge0$, this gives \eqref{eq:approx-sidorenko} with $c_{K_3}=1$.

For general $H$ we follow the proof of \cite[Theorem 4.1]{LovaszSigned} and group the stars. Let $f(x):=\int_0^1V(x,y)\dd y$, so that $\int f=0$ and $\mu+f(x)=\int_0^1W(x,y)\dd y\ge0$. For a vertex $v$ of $H$ of degree $d(v)\ge2$, the sets $F$ consisting of at least two edges at $v$ contribute
\[
  \sum_{k=2}^{d(v)}\binom{d(v)}k\mu^{e(H)-k}\int_0^1f^k
  =\mu^{e(H)-d(v)}\Bigl(\int_0^1(\mu+f)^{d(v)}-\mu^{d(v)}\Bigr)\ \ge\ 0
\]
to \eqref{eq:star-expansion}, using $\int f=0$ and Jensen's inequality. Every remaining $F$ with a nonzero contribution contains a subgraph $J$ isomorphic to $K_3$, to the path $P_3$ with three edges, or to the disjoint union $P_2\sqcup K_2$: a connected $F$ which is not a star contains $K_3$ or $P_3$, while a disconnected $F$ has at least two components, each with at least two edges. Since $|V|\le1$ and every vertex of $J$ has degree at most two, the generalized Cauchy--Schwarz inequality \cite[Lemma 3.3]{LovaszSigned} gives
\[
  |t(F,V)|\ \le\ t(J,|V|)\ \le\ \|V\|_2^3 .
\]
There are at most $2^{e(H)}$ remaining sets $F$, each with coefficient $\mu^{e(H)-|F|}\le1$, so $t(H,W)\ge\mu^{e(H)}-2^{e(H)}\|V\|_2^3$.
\end{proof}

We use this approximate Sidorenko inequality to deduce quantitative bounds on the entropy cost of a graphon $W$ in terms of its variance. 
\begin{proposition}\label{prop:coercivity}
Fix $q\in(0,1)$ and let $W\in\cW$. Write $m:=\dens(W)-q$ and $\sigma:=\|W-q\|_2$. Then
\begin{equation}\label{eq:coercive-1}
  \Ent(W)-h(q)\ \ge\ -\log(1/q)\,m+\tfrac12\sigma^2 .
\end{equation}
If moreover $t(H,W)\le q^{e(H)}$, then
\begin{equation}\label{eq:m-bound}
  m\le\frac{c_H\sigma^3}{e(H)q^{e(H)-1}}
  \qquad\text{and hence}\qquad
  \Ent(W)-h(q)\ \ge\ \tfrac12\sigma^2-\frac{c_H\log(1/q)}{e(H)q^{e(H)-1}}\,\sigma^3 .
\end{equation}
\end{proposition}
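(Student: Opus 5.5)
The plan is to prove \eqref{eq:coercive-1} pointwise via a second-order Taylor expansion of $h$ around $q$, and then deduce \eqref{eq:m-bound} from \cref{lem:approx-sidorenko}.

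\emph{Step 1: the pointwise bound.} We have $h'(x)=\log x$ and $h''(x)=1/x\ge1$ on $(0,1]$. Taylor's theorem with integral remainder therefore gives, for every $x\in[0,1]$,
\[
  h(x)\ \ge\ h(q)+\log q\,(x-q)+\tfrac12(x-q)^2 .
\]
At $x=0$ this also holds, by continuity of $h$ at $0$ with $h(0)=1$. Setting $x=W_{xy}$ and integrating over $[0,1]^2$, and using $\int(W-q)=m$ and $\int(W-q)^2=\sigma^2$, we obtain
\[
  \Ent(W)-h(q)\ \ge\ -\log(1/q)\,m+\tfrac12\sigma^2 ,
\]
which is \eqref{eq:coercive-1}.

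\emph{Step 2: the bound on $m$.} Assume $t(H,W)\le q^{e(H)}$. If $m\le0$, the bound on $m$ is trivial. If $m>0$, put $\mu=\dens(W)=q+m$. By the mean value theorem,
\[
  \mu^{e(H)}-q^{e(H)}\ \ge\ e(H)q^{e(H)-1}m .
\]
Combining this with feasibility and \cref{lem:approx-sidorenko} gives
\[
  e(H)q^{e(H)-1}m\ \le\ \mu^{e(H)}-t(H,W)\ \le\ c_H\|W-\mu\|_2^3 .
\]
Since $\int(W-q)^2=\|W-\mu\|_2^2+m^2$, the constant $\mu$ is the $L^2$ projection of $W$ onto constants, and so $\|W-\mu\|_2\le\sigma$. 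This yields $m\le c_H\sigma^3/(e(H)q^{e(H)-1})$.

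\emph{Step 3: the entropy bound.} Because the coefficient $-\log(1/q)$ in \eqref{eq:coercive-1} is negative, the upper bound on $m$ from Step 2 (used directly if $m>0$, and trivially if $m\le 0$, since then $-\log(1/q)\,m\ge0\ge-\log(1/q)\cdot c_H\sigma^3/(e(H)q^{e(H)-1})$) can be substituted into \eqref{eq:coercive-1}. This gives the second inequality in \eqref{eq:m-bound}.

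Everything here is routine once \cref{lem:approx-sidorenko} is available. The only points that need care are the Taylor bound near $x=0$, where $h'$ blows up but $h''\ge1$ still holds and continuity closes the gap, and the sign case $m\le0$.
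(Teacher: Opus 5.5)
Your proposal is correct and follows the paper's proof essentially step for step. Like the paper, you integrate the pointwise strong-convexity bound $h(x)\ge h(q)+\log q\,(x-q)+\tfrac12(x-q)^2$ (from $h''\ge1$ on $(0,1]$, with continuity at $0$). You then combine the approximate Sidorenko lemma with $\|W-\mu\|_2\le\sigma$ and the tangent-line bound for $x^{e(H)}$, handling the case $m\le0$ trivially.
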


\begin{proof}
We first claim that
\begin{equation}\label{eq:h-strong-convexity}
  h(x)\ \ge\ h(q)+\log q\,(x-q)+\tfrac12(x-q)^2,\qquad x\in[0,1].
\end{equation}
To see this, let $g(x)$ denote the difference of the two sides. Then $g(q)=g'(q)=0$ and $g''(x)=x^{-1}-1\ge0$ on $(0,1]$, so $g\ge0$ on $(0,1]$; the value at $x=0$ follows by continuity. Integrating \eqref{eq:h-strong-convexity} over $[0,1]^2$ with $x=W(u,v)$ gives \eqref{eq:coercive-1}, since $\log q=-\log(1/q)$.

Now suppose $t(H,W)\le q^{e(H)}$. If $m\le0$ there is nothing to prove, so let $m>0$ and put $\mu:=\dens(W)=q+m$. Since $\int(W-\mu)=0$, we have $\sigma^2=\|W-\mu\|_2^2+m^2\ge\|W-\mu\|_2^2$, and \cref{lem:approx-sidorenko} gives
\[
  \mu^{e(H)}-q^{e(H)}\ \le\ \mu^{e(H)}-t(H,W)\ \le\ c_H\sigma^3 .
\]
By convexity of $x\mapsto x^{e(H)}$ the left-hand side is at least $e(H)q^{e(H)-1}m$, which gives the first part of \eqref{eq:m-bound}. Substituting into \eqref{eq:coercive-1} gives the second.
\end{proof}

When the graphon $W$ both satisfies the lower tail event and has low entropy cost, the above implies that it must have large variance, in particular it is far from constant. The graphon must then compensate by increasing in density to drive down its entropy cost.
\begin{corollary}\label{cor:gap}
Fix $q\in(0,1)$ and suppose that $W\in\cW$ satisfies
\begin{equation}\label{eq:competitor}
  t(H,W)\le q^{e(H)},\qquad \Ent(W)\le h(q),\qquad W\not\equiv q .
\end{equation}
Then
\begin{equation}\label{eq:three-gaps}
  \|W-q\|_2\ \ge\ \frac{e(H)q^{e(H)-1}}{2c_H\log(1/q)},
  \qquad
  \dens(W)-q\ \ge\ \kappa(q)=\frac{e(H)^2q^{2e(H)-2}}{8c_H^2\log^3(1/q)},
  \qquad
  \|W-q\|_\cut\ \ge\ \kappa(q).
\end{equation}
In particular, if $\kappa(q)>1-q$ then no such $W$ exists, so $\cM_q=\{W\equiv q\}$.
\end{corollary}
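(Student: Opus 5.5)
The plan is to apply \cref{prop:coercivity} directly. Write $\sigma:=\|W-q\|_2$ and $m:=\dens(W)-q$, and set $A:=c_H\log(1/q)/(e(H)q^{e(H)-1})$.

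First, the $L^2$ gap. Since $W$ is feasible, \eqref{eq:m-bound} together with $\Ent(W)\le h(q)$ gives
\[
0\ge\Ent(W)-h(q)\ge\tfrac12\sigma^2-A\sigma^3 .
\]
Because $W\not\equiv q$ we have $\sigma>0$, so we may divide by $\sigma^2$. This yields $\sigma\ge 1/(2A)$, which is the first inequality in \eqref{eq:three-gaps}.

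Next, the density gap. Apply \eqref{eq:coercive-1} together with $\Ent(W)\le h(q)$:
\[
0\ge -\log(1/q)\,m+\tfrac12\sigma^2 ,
\]
so $m\ge\sigma^2/(2\log(1/q))$. Inserting $\sigma^2\ge 1/(4A^2)=e(H)^2q^{2e(H)-2}/(4c_H^2\log^2(1/q))$ gives
\[
m\ge \frac{e(H)^2q^{2e(H)-2}}{8c_H^2\log^3(1/q)}=\kappa(q).
\]
The cut-norm bound then follows from
\[
\|W-q\|_\cut\ge\Bigl|\int_{[0,1]^2}(W-q)\Bigr|=|m|\ge\kappa(q),
\]
obtained by taking $S=T=[0,1]$ in \eqref{eq:cut-norm}.

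For the final assertion, note that any graphon has $\dens(W)\le1$, so $m\le 1-q$. Hence if $\kappa(q)>1-q$, no $W$ satisfying \eqref{eq:competitor} can exist. Now take any $W\in\cM_q$. It is feasible and satisfies $\Ent(W)=\Phi(q)\le h(q)$ by \eqref{eq:Phi-le-h}, so it would satisfy \eqref{eq:competitor} unless $W\equiv q$. Therefore $\cM_q\subseteq\{q\}$. Since $\cM_q$ is nonempty, we conclude $\cM_q=\{W\equiv q\}$.

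There is no real obstacle in this argument. The one point needing care is the division by $\sigma^2$, which is justified because $W\not\equiv q$ forces $\sigma>0$ (we work modulo null sets).
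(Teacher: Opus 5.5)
Your proof is correct and follows the paper's argument step for step: the $L^2$ gap from \eqref{eq:m-bound} after dividing by $\sigma^2>0$, the density gap from \eqref{eq:coercive-1}, and the cut bound from taking $S=T=[0,1]$. Your final step (any minimizer other than $W\equiv q$ would itself satisfy \eqref{eq:competitor}, and $\cM_q\neq\varnothing$) is a slightly more direct version of the paper's route through $\Phi(q)=h(q)$ and \cref{lem:constants}.
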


\begin{proof}
Write $\sigma=\|W-q\|_2>0$ and $m=\dens(W)-q$. The second inequality in \eqref{eq:m-bound} and $\Ent(W)\le h(q)$ give $0\ge\tfrac12\sigma^2-\bigl(c_H\log(1/q)/(e(H)q^{e(H)-1})\bigr)\sigma^3$, and dividing by $\sigma^2>0$ yields the first bound in \eqref{eq:three-gaps}. Next, \eqref{eq:coercive-1} and $\Ent(W)\le h(q)$ give $m\ge\sigma^2/(2\log(1/q))$, and substituting the first bound gives the second. The third follows from $\|W-q\|_\cut\ge|\int(W-q)|=m$, obtained by taking $S=T=[0,1]$ in \eqref{eq:cut-norm}. Finally, $\dens(W)\le1$ always, so if $\kappa(q)>1-q$ the second bound is impossible and no $W$ satisfies \eqref{eq:competitor}; then $\Phi(q)=h(q)$, and $\cM_q=\{W\equiv q\}$ by \cref{lem:constants}.
\end{proof}

A second consequence is an explicit entropy barrier. It does not follow immediately from \eqref{eq:m-bound}, since a graphon can be close to $q$ in cut norm while $\|W-q\|_2$ is of order one. However, we can use the fact that the linear term in \eqref{eq:coercive-1} is directly controlled by the cut norm.

\begin{proposition}\label{prop:barrier}
Fix $q\in(0,1)$ and set $b(q):=\tfrac18\kappa(q)$. Let $0<a\le b(q)$. If $W\in\cW$ satisfies $t(H,W)\le q^{e(H)}$ and $a\le\|W-q\|_\cut\le b(q)$, then
\begin{equation}\label{eq:barrier}
  \Ent(W)\ \ge\ h(q)+\min\Bigl\{\frac{a^2}{4},\ \frac{e(H)^2q^{2e(H)-2}}{64c_H^2\log^2(1/q)}\Bigr\}.
\end{equation}
\end{proposition}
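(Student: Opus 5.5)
The plan is to combine the two inequalities of \cref{prop:coercivity} and split into cases according to the size of $\sigma:=\|W-q\|_2$ relative to the threshold
\[
  \sigma_0:=\frac{e(H)q^{e(H)-1}}{2c_H\log(1/q)}
\]
appearing in \cref{cor:gap}. Write $m:=\dens(W)-q$. Two elementary facts come from the cut norm hypothesis via \eqref{eq:norm-chain} and the choice $S=T=[0,1]$ in \eqref{eq:cut-norm}:
\[
  \sigma\ \ge\ \|W-q\|_\cut\ \ge\ a,
  \qquad
  m\ \le\ \|W-q\|_\cut\ \le\ b(q)=\kappa(q)/8 .
\]
The point is that the upper cut bound controls the linear term in \eqref{eq:coercive-1}, while the lower cut bound controls the quadratic term.

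\emph{Case $\sigma\le\sigma_0/2$.} Here we use the second part of \eqref{eq:m-bound}, which applies because $t(H,W)\le q^{e(H)}$. Its cubic coefficient is exactly $1/(2\sigma_0)$, so
\[
  \Ent(W)-h(q)\ \ge\ \tfrac12\sigma^2-\frac{\sigma^3}{2\sigma_0}
  =\tfrac12\sigma^2\Bigl(1-\frac{\sigma}{\sigma_0}\Bigr)
  \ \ge\ \tfrac14\sigma^2\ \ge\ \tfrac14a^2 .
\]
If $m\le0$, the bound $\sigma^2/2\ge a^2/4$ follows directly from \eqref{eq:coercive-1}, so the sign of $m$ causes no trouble.

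\emph{Case $\sigma>\sigma_0/2$.} Here we use \eqref{eq:coercive-1} together with $m\le\kappa(q)/8$:
\[
  \Ent(W)-h(q)\ \ge\ \tfrac12\sigma^2-\log(1/q)\frac{\kappa(q)}8
  \ >\ \frac{\sigma_0^2}8-\frac{\log(1/q)\kappa(q)}8 .
\]
Substituting the definitions gives
\[
  \frac{\sigma_0^2}{8}=\frac{e(H)^2q^{2e(H)-2}}{32c_H^2\log^2(1/q)},
  \qquad
  \frac{\log(1/q)\kappa(q)}{8}=\frac{e(H)^2q^{2e(H)-2}}{64c_H^2\log^2(1/q)},
\]
so the difference is exactly the second term of the minimum in \eqref{eq:barrier}.

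Taking the minimum over the two cases yields \eqref{eq:barrier}. I do not expect a real obstacle here. The only delicate point is choosing the case split at $\sigma_0/2$ rather than at $\sigma_0$: at $\sigma=\sigma_0$ the cubic bound degenerates to zero, so the split must leave room both for the factor $\tfrac14$ in the first case and for the constant matching in the second.
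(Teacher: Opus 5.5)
Your proposal is correct and matches the paper's proof: the same split at $\tfrac12\sigma_\star$ (your $\sigma_0/2$), the cubic bound \eqref{eq:m-bound} when $\sigma$ is small, and \eqref{eq:coercive-1} with $m\le b(q)$ when $\sigma$ is large, with the same constant bookkeeping. Your aside about $m\le0$ is harmless but unnecessary, because the second inequality in \eqref{eq:m-bound} already holds for every feasible $W$ whatever the sign of $m$.
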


\begin{proof}
Write $\sigma=\|W-q\|_2$ and $m=\dens(W)-q$. By \eqref{eq:norm-chain} we have $\sigma\ge\|W-q\|_\cut\ge a$, and taking $S=T=[0,1]$ in \eqref{eq:cut-norm} gives $|m|\le\|W-q\|_\cut\le b(q)$. Together with \eqref{eq:m-bound},
\begin{equation}\label{eq:two-bounds-on-m}
  m\le\min\Bigl\{b(q),\ \frac{c_H\sigma^3}{e(H)q^{e(H)-1}}\Bigr\}.
\end{equation}
Set $\sigma_\star:=e(H)q^{e(H)-1}/(2c_H\log(1/q))$, the threshold appearing in \eqref{eq:three-gaps}, and note that the choice $b(q)=\kappa(q)/8$ is precisely the one for which $\bigl(e(H)q^{e(H)-1}b(q)/c_H\bigr)^{1/3}=\tfrac12\sigma_\star$.

Suppose first that $\sigma\le\tfrac12\sigma_\star$. Then $c_H\sigma^3/(e(H)q^{e(H)-1})\le b(q)$, so \eqref{eq:coercive-1} and \eqref{eq:two-bounds-on-m} give
\[
  \Ent(W)-h(q)\ \ge\ \tfrac12\sigma^2-\frac{c_H\log(1/q)}{e(H)q^{e(H)-1}}\sigma^3
  =\sigma^2\Bigl(\tfrac12-\frac{c_H\log(1/q)}{e(H)q^{e(H)-1}}\sigma\Bigr)\ \ge\ \tfrac14\sigma^2\ \ge\ \tfrac14a^2,
\]
because $\bigl(c_H\log(1/q)/(e(H)q^{e(H)-1})\bigr)\cdot\tfrac12\sigma_\star=\tfrac14$.

Suppose instead that $\sigma>\tfrac12\sigma_\star$. Then \eqref{eq:coercive-1} and $m\le b(q)$ give
\[
  \Ent(W)-h(q)\ \ge\ -\log(1/q)\,b(q)+\tfrac12\Bigl(\tfrac12\sigma_\star\Bigr)^2
  =-\log(1/q)\,b(q)+\frac{e(H)^2q^{2e(H)-2}}{32c_H^2\log^2(1/q)}
  =\log(1/q)\,b(q),
\]
where the last step uses $e(H)^2q^{2e(H)-2}/\bigl(32c_H^2\log^2(1/q)\bigr)=2\log(1/q)\,b(q)$. 
\end{proof}

\Cref{prop:barrier} says that the constant graphon is separated from all other feasible configurations by a genuine entropy barrier, uniformly over compact subintervals of $(0,1)$ and with explicit constants. In particular, any path inside the feasible set which is continuous in the cut metric and joins $W\equiv q$ to a graphon of cost at most $h(q)$ must cross the annulus $a\le\|W-q\|_\cut\le b(q)$ and pay the price \eqref{eq:barrier}. Symmetry breaking therefore cannot occur through an infinitesimal bifurcation off the constant solution. This fact, in the form of \eqref{eq:three-gaps}, is what produces coexistence at $q_c$.

\section{A single threshold, coexistence, and a corner}\label{sec:transition}
In this section, we deduce our results about the nature of the phase transition from the local minimum estimates of the previous section. 

The following identity is essential, as it translates the increase in edge density to a strictly smaller entropy cost at smaller deficits. Restricted to two-valued graphons, the identity below is the computation Zhao used in \cite[\S4.2]{ZhaoLowerTail} to propagate symmetry breaking downward along the bipodal family. The new ingredient here is \cref{cor:gap}, which supplies a quantitative gap in the inequality $\dens(W)>q$ for an arbitrary competitor.

\begin{lemma}\label{lem:scaling}
For every $W\in\cW$, every $q\in[0,1]$ and every $s\in[0,1]$,
\begin{equation}\label{eq:scaling}
  \Ent(sW)-h(sq)=s\bigl(\Ent(W)-h(q)\bigr)+s\log s\,\bigl(\dens(W)-q\bigr),
\end{equation}
with the convention $s\log s=0$ at $s=0$, and $t(H,sW)=s^{e(H)}t(H,W)$.
\end{lemma}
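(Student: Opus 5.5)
The identity is a pointwise computation, so the plan is to verify it for $h$ on $[0,1]$ and then integrate over $[0,1]^2$. Fix $s\in(0,1]$ and $x\in[0,1]$. Writing $h(sx)=sx\log(sx)-sx+1$ and expanding $\log(sx)=\log s+\log x$ gives
\[
  h(sx)=s\bigl(x\log x-x+1\bigr)+sx\log s+(1-s)=s\,h(x)+s\log s\,x+(1-s).
\]
At $x=0$ we use the convention $0\log 0=0$: the left side is $h(0)=1$ and the right side is $s+0+1-s=1$, so the formula holds there too. Applying the same formula with $x=q$ and subtracting yields
\[
  h(sx)-h(sq)=s\bigl(h(x)-h(q)\bigr)+s\log s\,(x-q).
\]
The constant $1-s$ cancels here, which is why the identity involves differences.

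Next I would set $x=W(u,v)$ and integrate over $[0,1]^2$ against Lebesgue measure. All functions involved are bounded, since $h$ is continuous on $[0,1]$ and $0\le W\le 1$, so integration is legitimate. Using $\int h(sW)=\Ent(sW)$, $\int h(W)=\Ent(W)$ and $\int W=\dens(W)$, and noting that $[0,1]^2$ has total mass one, this produces \eqref{eq:scaling}.

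The endpoint $s=0$ has to be checked separately, with the convention $s\log s=0$. There $sW\equiv0$ and $sq=0$, so the left side is $h(0)-h(0)=0$, and the right side is also $0$.

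Finally, the homogeneity $t(H,sW)=s^{e(H)}t(H,W)$ follows directly from the definition: each of the $e(H)$ factors $W_{x_ix_j}$ in the integrand picks up a factor $s$, and $s^{e(H)}$ can be pulled out of the integral.

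I expect no genuine obstacle. The only delicate points are the boundary cases $x=0$ and $s=0$, where the convention $0\log0=0$ and $h(0)=1$ must be used consistently; both are handled above.
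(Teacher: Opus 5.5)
Your proof is correct and is essentially the paper's argument. The paper does the same algebra, expanding $\log(sx)=\log s+\log x$ and using $x\log x=h(x)+x-1$, but directly on the integrals $\Ent(sW)$ and $h(sq)$. You instead first derive the pointwise identity $h(sx)=s\,h(x)+s\log s\,x+(1-s)$ and then integrate, which also makes the boundary cases $x=0$ and $s=0$ explicit.
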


\begin{proof}
From $h(x)=x\log x-x+1$ we get $\Ent(sW)=s\int W\log W+s\log s\,\dens(W)-s\,\dens(W)+1$ and, using $q\log q=h(q)+q-1$,
\[
  h(sq)=sq\log q+sq\log s-sq+1=s\,h(q)-s+sq\log s+1 .
\]
Since $\int W\log W=\Ent(W)+\dens(W)-1$, subtracting the two displays gives \eqref{eq:scaling}. The second assertion holds because $t(H,\cdot)$ is homogeneous of degree $e(H)$.
\end{proof}

\begin{corollary}\label{cor:downward}
Let $q\in(0,1)$ and suppose some $W$ satisfies \eqref{eq:competitor}. Then $\Phi(sq)<h(sq)$ for every $s\in(0,1)$.
\end{corollary}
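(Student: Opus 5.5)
The plan is to use the dilation $sW$ as an explicit competitor at parameter $sq$ and to evaluate its cost with \cref{lem:scaling}. Fix $s\in(0,1)$ and set $W':=sW$.

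\emph{Feasibility.} By the second part of \cref{lem:scaling},
\[
t(H,W')=s^{e(H)}t(H,W)\le s^{e(H)}q^{e(H)}=(sq)^{e(H)},
\]
so $W'$ is feasible for $\Phi(sq)$.

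\emph{Cost.} By \eqref{eq:scaling},
\[
\Ent(W')-h(sq)=s\bigl(\Ent(W)-h(q)\bigr)+s\log s\,\bigl(\dens(W)-q\bigr).
\]
The first term is at most $0$, because $\Ent(W)\le h(q)$ is part of \eqref{eq:competitor}. For the second term, $s\log s<0$ since $s\in(0,1)$. Also, \cref{cor:gap} applies to $W$, because it satisfies exactly \eqref{eq:competitor}, and it gives $\dens(W)-q\ge\kappa(q)>0$. So the second term is strictly negative, and
\[
\Phi(sq)\le\Ent(W')<h(sq).
\]

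The only step that needs more than bookkeeping is the strict positivity $\dens(W)>q$. Without it the dilation would only give $\Phi(sq)\le h(sq)$. Here it comes directly from \cref{cor:gap}, which rests on \cref{prop:coercivity}: if $\dens(W)\le q$, then \eqref{eq:coercive-1} would force $\Ent(W)-h(q)\ge\tfrac12\sigma^2>0$, since $W\not\equiv q$, contradicting $\Ent(W)\le h(q)$. Once this positivity is in hand, the rest is substitution into \eqref{eq:scaling}.
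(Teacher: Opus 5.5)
Your proposal is correct and follows the paper's proof essentially verbatim: feasibility of $sW$ at $sq$ by homogeneity, then \eqref{eq:scaling} with the first term nonpositive and the second strictly negative because \cref{cor:gap} gives $\dens(W)>q$. Your closing remark is also right: the strict inequality $\dens(W)>q$ already follows from \eqref{eq:coercive-1} alone, via strong convexity and $\sigma>0$. The full quantitative gap $\kappa(q)$ from \cref{cor:gap} is not needed for this step.
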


\begin{proof}
By \cref{cor:gap} we have $\dens(W)>q$. In \eqref{eq:scaling} the first term on the right is nonpositive and the second is strictly negative, so $\Ent(sW)<h(sq)$; and $t(H,sW)=s^{e(H)}t(H,W)\le(sq)^{e(H)}$, so $sW$ is feasible at $sq$.
\end{proof}

\subsection{The critical parameter and coexistence}
The single critical parameter follows essentially immediately, as thinning any non-constant optimizer implies that the symmetry breaking regime must be an interval. Coexistence then follows, as the strict increase in edge density prevents non-constant optimizers from converging to the constant one at the critical parameter. 

By \cite[Theorem 2.7]{ZhaoLowerTail} the set $\{q\in(0,1):\Phi(q)<h(q)\}$ is nonempty and is not all of $(0,1)$. Define
\begin{equation}\label{eq:qc-def}
  q_c:=\sup\bigl\{q\in(0,1):\Phi(q)<h(q)\bigr\}\ \in\ (0,1).
\end{equation}

\begin{theorem}\label{thm:threshold}
The parameter $q_c$ defined in \eqref{eq:qc-def} satisfies all the conclusions of \cref{thm:variational}. Explicitly:
\begin{equation}\label{eq:phase-intervals}
  \Phi(q)<h(q)\ \ (0<q<q_c),\qquad \Phi(q)=h(q)\ \ (q_c\le q<1),
\end{equation}
the constant graphon is the unique minimizer for every $q>q_c$, and there exists a nonconstant $W_c\in\cM_{q_c}$ with
\begin{equation}\label{eq:coexistence}
  t(H,W_c)=q_c^{e(H)},\qquad \Ent(W_c)=h(q_c),\qquad \dens(W_c)\ge q_c+\kappa(q_c).
\end{equation}
\end{theorem}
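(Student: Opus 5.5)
We will derive all parts from \cref{cor:gap}, \cref{cor:downward}, \cref{lem:Phi-basic}, \cref{lem:constants} and compactness of $\widetilde\cW$.

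\emph{The phase intervals \eqref{eq:phase-intervals}.} First we show that the set $A:=\{q:\Phi(q)<h(q)\}$ is a down-set. If $q_0\in A$, take $W\in\cM_{q_0}$. Then $\Ent(W)<h(q_0)$ and $t(H,W)\le q_0^{e(H)}$, and $W$ is nonconstant by \cref{lem:constants}, so $W$ satisfies \eqref{eq:competitor}. \Cref{cor:downward} then gives $(0,q_0)\subseteq A$, and hence $A\supseteq(0,q_c)$ by the definition of $q_c$. For $q>q_c$ we have $q\notin A$, so $\Phi(q)=h(q)$ by \eqref{eq:Phi-le-h}. At $q=q_c$, continuity of $\Phi$ (\cref{lem:Phi-basic}) and of $h$ gives $\Phi(q_c)=h(q_c)$, because $\Phi\le h$ everywhere and equality holds on $(q_c,1)$. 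This proves \eqref{eq:phase-intervals}.

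\emph{Uniqueness for $q>q_c$.} Suppose $q>q_c$ and $W\in\cM_q$ is nonconstant. Then $W$ satisfies \eqref{eq:competitor}, since $\Ent(W)=\Phi(q)=h(q)$. \Cref{cor:downward} then yields $\Phi(sq)<h(sq)$ for all $s<1$. Choosing $s$ with $sq>q_c$ contradicts \eqref{eq:phase-intervals}. So every minimizer is constant, and \cref{lem:constants} forces it to be $W\equiv q$. For $q<q_c$, every $W\in\cM_q$ is nonconstant by \cref{lem:constants} and satisfies \eqref{eq:competitor}, so \cref{cor:gap} gives \eqref{eq:sb-gap}. Likewise, any nonconstant $W\in\cM_{q_c}$ satisfies \eqref{eq:competitor}, which gives the density bound in part \ref{it:coex}.

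\emph{Coexistence.} This step is the main obstacle: producing a nonconstant $W_c\in\cM_{q_c}$. Take $q_k\uparrow q_c$ and $W_k\in\cM_{q_k}$, which by the above satisfy $\dens(W_k)\ge q_k+\kappa(q_k)$. By compactness, pass to a subsequence with $W_k\to W_c$ in $\delta_\cut$. Three continuity facts finish the argument:
\begin{itemize}
\item $t(H,W_c)\le q_c^{e(H)}$, by continuity of homomorphism densities;
\item $\Ent(W_c)\le\liminf_k\Phi(q_k)=\Phi(q_c)=h(q_c)$, by \cref{lem:lsc} and continuity of $\Phi$;
\item $\dens(W_c)\ge q_c+\kappa(q_c)$, since $\dens$ is $1$-Lipschitz in $\delta_\cut$ and $\kappa$ is continuous.
\end{itemize}
Hence $W_c$ is feasible with cost at most $\Phi(q_c)$, so $W_c\in\cM_{q_c}$. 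It is nonconstant because its density exceeds $q_c$ (\cref{lem:constants}). \Cref{lem:Phi-basic} then gives $t(H,W_c)=q_c^{e(H)}$, and \eqref{eq:phase-intervals} gives $\Ent(W_c)=h(q_c)$. The constant graphon $W\equiv q_c$ also lies in $\cM_{q_c}$, since it is feasible with cost $h(q_c)=\Phi(q_c)$.

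\emph{Uniqueness of $q_c$.} Properties \ref{it:rs} and \ref{it:sb} determine $q_c$ as the boundary of $A$, so no other value can satisfy them.
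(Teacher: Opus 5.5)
Your proposal is correct and follows the paper's proof essentially step for step. The $\{\Phi<h\}$ set is downward closed via \cref{cor:downward}, uniqueness above $q_c$ follows by contradiction with that same corollary, and coexistence comes from a cut-convergent subsequence of minimizers $W_k\in\cM_{q_k}$ with $q_k\uparrow q_c$, whose uniform density gap from \cref{cor:gap} survives in the limit.
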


\begin{proof}
We first show that the symmetry breaking set is an interval. Suppose $\Phi(q)<h(q)$ and let $W\in\cM_q$. By \cref{lem:constants} the graphon $W$ is nonconstant, and it satisfies \eqref{eq:competitor}, so \cref{cor:downward} gives $\Phi(q')<h(q')$ for every $q'<q$. Hence $\{q:\Phi(q)<h(q)\}$ is a downward closed subset of $(0,1)$ and therefore contains $(0,q_c)$. Its complement contains $(q_c,1)$ by the definition of the supremum, and $\Phi(q_c)=h(q_c)$ by continuity of $\Phi-h$ (\cref{lem:Phi-basic}). This is \eqref{eq:phase-intervals}, and it also shows that $q_c$ is the unique parameter with these properties.

We next prove uniqueness above $q_c$. Let $q>q_c$ and $W\in\cM_q$. If $W$ were nonconstant then, since $\Ent(W)=\Phi(q)=h(q)$, it would satisfy \eqref{eq:competitor}, and \cref{cor:downward} would give $\Phi(q')<h(q')$ for every $q'\in(q_c,q)$, contradicting \eqref{eq:phase-intervals}. So $W$ is constant, hence $W\equiv q$ by \cref{lem:constants}. This proves \cref{thm:variational}\ref{it:rs}.

To prove part \ref{it:sb}, note that if $q<q_c$ then $\Phi(q)<h(q)$, so every $W\in\cM_q$ is nonconstant by \cref{lem:constants} and satisfies \eqref{eq:competitor}, and we may apply \cref{cor:gap}. The same argument gives the last assertion of part \ref{it:coex}, since a nonconstant $W\in\cM_{q_c}$ has $\Ent(W)=\Phi(q_c)=h(q_c)$.

It remains to prove coexistence at $q_c$. The constant graphon lies in $\cM_{q_c}$ because $\Phi(q_c)=h(q_c)$. For the other phase, pick $q_k\uparrow q_c$ and $W_k\in\cM_{q_k}$. Each $W_k$ is nonconstant, so by \cref{cor:gap},
\begin{equation}\label{eq:uniform-gap-along-sequence}
  \dens(W_k)\ \ge\ q_k+\kappa(q_k).
\end{equation}
By compactness, pass to a subsequence with $W_k\to W_c$ in cut distance. Then $t(H,W_c)\le q_c^{e(H)}$, and \cref{lem:lsc} together with continuity of $\Phi$ gives
\[
  \Ent(W_c)\le\liminf_k\Ent(W_k)=\liminf_k\Phi(q_k)=\Phi(q_c),
\]
so $W_c\in\cM_{q_c}$. Since $\dens$ is continuous in the cut metric and $\kappa$ is continuous, \eqref{eq:uniform-gap-along-sequence} passes to the limit and yields $\dens(W_c)\ge q_c+\kappa(q_c)>q_c$; in particular $W_c$ is not constant, by \cref{lem:constants}. The equality $t(H,W_c)=q_c^{e(H)}$ is \cref{lem:Phi-basic}, and $\Ent(W_c)=\Phi(q_c)=h(q_c)$. This proves \eqref{eq:coexistence} and part \ref{it:coex}.
\end{proof}

\subsection{The corner in the rate function}
The right derivative of the rate function exists and can be explicitly computed since we know the value and solution of the variational problem. The difficulty comes from the left derivative, where we do not know any optimizers. Instead, we bound the left derivative by the derivative of an auxiliary rate function, formed by thinning the non-constant optimizer of maximal density at the critical parameter.  

Recall $\Phi(H,\eta)=\Phi(\eta^{1/e(H)})$, $\eta_c=q_c^{e(H)}$, and the latent jump $\Delta=\max_{W\in\cM_{q_c}}\dens(W)-q_c$ from \eqref{eq:Delta}. The maximum is attained by compactness of $\cM_{q_c}$ and continuity of $\dens$ in the cut metric and $\Delta\ge\kappa(q_c)$ is \eqref{eq:coexistence}. Let $W_c\in\cM_{q_c}$ attain it, so $\dens(W_c)=q_c+\Delta$.

The existence of the left derivative in \cref{thm:corner} follows from local semiconcavity of $\Phi$.
\begin{lemma}\label{lem:semiconcave}
Let $0<\alpha<\beta<1$. Then $q\mapsto\Phi(q)-q^2/(2\alpha^2)$ is concave on $[\alpha,\beta]$. In particular, $\Phi$ has finite one-sided derivatives at every point of $(0,1)$.
\end{lemma}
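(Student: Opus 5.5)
We will show that $\Phi$ is an infimum of functions that are uniformly semiconcave on $[\alpha,\beta]$; the conclusion then follows immediately.

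For each $W\in\cM_{q_0}$ with $q_0\in[\alpha,\beta]$, we use the dilation of \cref{lem:scaling}. For $s$ near $1$ the graphon $sW$ is feasible at $sq_0$, since $t(H,sW)=s^{e(H)}t(H,W)\le(sq_0)^{e(H)}$. Therefore, setting
\[
  \psi_W(q):=\Ent\bigl((q/q_0)W\bigr),
\]
we get $\Phi(q)\le\psi_W(q)$ for every $q\le q_0$, with equality at $q=q_0$.

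Dilation only moves downward. To get a competitor for $q>q_0$ we also need an upward perturbation. The cleanest way to handle both directions at once is to drop dilation and use the family $W_t:=W+t(1-W)\cdot\lambda$ for suitable $\lambda$. Even simpler is to apply the dilation identity in reverse: write $W=s'W'$. But $W/s'$ may exceed $1$, which is the nuisance.

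Instead I will parametrize by $\eta$-like exponents. Consider $q\mapsto\Ent((q/q_0)W)$ only where $(q/q_0)W\le1$. The upward direction is needed only for $q$ close to $q_0$, so a boundedness issue appears only if $\|W\|_\infty=1$ on a set of positive measure.

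To avoid this, I will use the alternative competitor
\[
  W^{(q)}:=1-\tfrac{1-q}{1-q_0}\,(1-W)\quad\text{for }q<q_0 ?
\]
This fails, because it is not monotone in the right way for the $H$-density.

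So the robust plan is to take $\psi_W(q):=\Ent(\min\{1,(q/q_0)W\})$ for $q\in[\alpha,\beta]$. Truncation only lowers $t(H,\cdot)$, so $\psi_W$ is a feasible upper bound on all of $[\alpha,\beta]$ and is exact at $q_0$.

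The key computation is the second derivative in $q$. Pointwise, $x\mapsto h(\min\{1,cx\})$ has second derivative in $c$ equal to $x^2/(cx)=x/c\le1/c$ on the untruncated region. On the truncated region the function is constant, $h(1)=0$. At the truncation kink the one-sided slopes are $\log(1)\cdot x=0$ on the untruncated side and $0$ on the truncated side, so the derivative is continuous there and no concave kink is created.

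With $c=q/q_0$, the chain rule gives
\[
  \frac{d^2}{dq^2}\,h\bigl(\min\{1,(q/q_0)x\}\bigr)\le\frac{1}{c\,q_0^2}=\frac{1}{q\,q_0}\le\frac1{\alpha^2}.
\]
Integrating over $[0,1]^2$, each $\psi_W(q)-q^2/(2\alpha^2)$ is concave on $[\alpha,\beta]$. Any value of the constant of order at most $1/\alpha^2$ suffices, and the stated constant is consistent with this bound.

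Since $\Phi=\inf_{q_0,W}\psi_W$ on $[\alpha,\beta]$, the function $\Phi-q^2/(2\alpha^2)$ is an infimum of concave functions and is therefore concave. The one-sided derivatives then exist at every interior point. Every point of $(0,1)$ is interior to some $[\alpha,\beta]$, so $\Phi$ has finite one-sided derivatives everywhere on $(0,1)$.

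I expect the main obstacle to be this truncation step: justifying the second-derivative bound across the kink. The approach will be to verify that $c\mapsto h(\min\{1,cx\})$ is $C^1$, using $h'(1)=0$, with a piecewise second derivative bounded by $x/c$.
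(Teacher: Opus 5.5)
Your final argument is correct and is the paper's proof. You dominate $\Phi$ on $[\alpha,\beta]$ by the truncated dilates $\Ent(\min\{1,(q/q_0)W\})$ of minimizers $W\in\cM_{q_0}$, which are exact at $q_0$. You check that each is $C^1$ in $q$ (because $h'(1)=0$) with second derivative at most $x/(qq_0)\le 1/\alpha^2$, and take the infimum of the resulting concave functions. The detours before you settle on truncation (the untruncated-only dilation, the $W_t$ family, the rescaled-complement competitor) are unnecessary and can be deleted.
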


\begin{proof}
For $r\in[\alpha,\beta]$ choose $W_r\in\cM_r$ and put $g_r(q):=\Ent\bigl(\min\{(q/r)W_r,1\}\bigr)$. Since $t(H,\cdot)$ is monotone, $t\bigl(H,\min\{(q/r)W_r,1\}\bigr)\le(q/r)^{e(H)}t(H,W_r)\le q^{e(H)}$, so $\Phi\le g_r$ on $[\alpha,\beta]$, with equality at $q=r$. Hence
\[
  \Phi(q)=\min_{r\in[\alpha,\beta]}g_r(q),\qquad q\in[\alpha,\beta].
\]
Fix $w\in[0,1]$ and put $\phi(q):=h(\min\{qw/r,1\})$. Where $qw<r$ we have $\phi''(q)=(w/r)^2h''(qw/r)=w/(rq)\le1/q^2\le1/\alpha^2$, using $h''(x)=1/x$; where $qw>r$ the function $\phi$ is constant; and $\phi$ is continuously differentiable across $q=r/w$ because $h'(1)=0$. Thus $\phi(q)-q^2/(2\alpha^2)$ has a nonincreasing derivative and is concave on $[\alpha,\beta]$. Integrating over $w=W_r(x,y)$ shows that $g_r(q)-q^2/(2\alpha^2)$ is concave on $[\alpha,\beta]$, and hence so is the infimum of these functions over $r$. Finally, a concave function has finite one-sided derivatives at interior points of its interval of definition, and $\alpha,\beta$ were arbitrary.
\end{proof}

\begin{proof}[Proof of \cref{thm:corner}]
By \eqref{eq:phase-intervals}, $\Phi(H,\eta)=h(\eta^{1/e(H)})$ for $\eta\in[\eta_c,1)$, so
\[
  \Phi'_+(H,\eta_c)=\frac{\mathrm d}{\mathrm d\eta}\Big|_{\eta=\eta_c}h(\eta^{1/e(H)})=\frac{h'(q_c)}{e(H)q_c^{e(H)-1}}=\frac{\log q_c}{e(H)q_c^{e(H)-1}},
\]
which is \eqref{eq:right-derivative}.

Now define, for $q\in(0,q_c]$,
\begin{equation}\label{eq:C-def}
  C(q):=\Ent\Bigl(\tfrac q{q_c}W_c\Bigr)
  \overset{\eqref{eq:scaling}}{=}h(q)+\frac q{q_c}\log\frac q{q_c}\cdot\Delta,
\end{equation}
where we used $\Ent(W_c)=h(q_c)$ and $\dens(W_c)-q_c=\Delta$. Since $t\bigl(H,(q/q_c)W_c\bigr)=(q/q_c)^{e(H)}q_c^{e(H)}=q^{e(H)}$, the dilate $(q/q_c)W_c$ is feasible at $q$, so
\begin{equation}\label{eq:Phi-le-C}
  \Phi(q)\le C(q)\quad (0<q\le q_c),\qquad \Phi(q_c)=C(q_c)=h(q_c).
\end{equation}
Differentiating \eqref{eq:C-def} at $q_c$ from the left gives
\begin{equation*}
  C'_-(q_c)=h'(q_c)+\frac{\Delta}{q_c}=\log q_c+\frac{\Delta}{q_c}.
\end{equation*}
By \eqref{eq:Phi-le-C} and $q-q_c<0$,
\[
  \frac{\Phi(q)-\Phi(q_c)}{q-q_c}\ \ge\ \frac{C(q)-C(q_c)}{q-q_c}\qquad (q<q_c).
\]
By \cref{lem:semiconcave} the left derivative $\Phi'_-(q_c)$ exists, so letting $q\uparrow q_c$ gives
\begin{equation}\label{eq:Phi-left}
  \Phi'_-(q_c)\ \ge\ C'_-(q_c)=\log q_c+\frac{\Delta}{q_c}.
\end{equation}
Changing variables to $\eta=q^{e(H)}$, and using that $\eta\mapsto\eta^{1/e(H)}$ is differentiable at $\eta_c$ with derivative $1/(e(H)q_c^{e(H)-1})>0$, we see that $\Phi'_-(H,\eta_c)$ exists and that \eqref{eq:Phi-left} becomes
\[
  \Phi'_-(H,\eta_c)=\frac{\Phi'_-(q_c)}{e(H)q_c^{e(H)-1}}\ \ge\ \frac1{e(H)q_c^{e(H)-1}}\Bigl(\log q_c+\frac{\Delta}{q_c}\Bigr)
  =\Phi'_+(H,\eta_c)+\frac{\Delta}{e(H)q_c^{e(H)}},
\]
which is the first inequality in \eqref{eq:left-derivative}. The second follows from $\Delta\ge\kappa(q_c)$, since $\kappa(q_c)/(e(H)q_c^{e(H)})=e(H)q_c^{e(H)-2}/(8c_H^2\log^3(1/q_c))$. 
\end{proof}

\section{The conditioned random graph}\label{sec:conditioning}
In this section we prove our results about $G(n,p)$ conditioned on the lower tail event. Throughout this section $q\in(0,1)$ is fixed, $p=p_n\to0$ with $p_nn^{1/m_2(H)}\to\infty$ (so that also $np_n\to\infty$, since $m_2(H)>1$), and $\nu_{n,q}$ denotes the law of $G_{\cL}$, that is, of $G(n,p)$ conditioned on the lower tail event $\cL_p(H,q^{e(H)})$ of \eqref{eq:LT}. We write $Y=(Y_e)_{e\in \binom{[n]}2}$ for the edge-indicator vector under $\nu_{n,q}$, all expectations and probabilities being with respect to $\nu_{n,q}$ unless indicated otherwise. As before, each copy $A\in\cH(H)$ is identified with its edge set. For a bounded symmetric kernel $U$ we abbreviate
\begin{equation*}
  d_q(U):=\dist_\cut(U,\cM_q)=\inf_{W\in\cM_q}\delta_\cut(U,W).
\end{equation*}

\subsection{Pinning}
To start, we use the entropy increment scheme of \cite{KozmaSamotij} to select a small set of edges which decomposes the lower tail measure into a mixture of ``nearly product'' measures.
\begin{lemma}\label{lem:pinning}
For all $\alpha,\delta>0$ there is $L=L(\alpha,\delta)$ such that, whenever $p\ge Ln^{-1/m_2(H)}$ and $n$ is large, there is a deterministic $S\subseteq \binom{[n]}2$ with $|S|\le\alpha\binom n2$ and the following property. Define the (random) vector $\mathbf a=\mathbf a(Y_S)$ by
\begin{equation*}
  a_e:=\E\bigl[Y_e\mid Y_S\bigr]\ \ (e\notin S),\qquad a_e:=p\ \ (e\in S).
\end{equation*}
Then $0\le a_e\le p$ for every $e$, and with probability at least $1-\delta$,
\begin{equation}\label{eq:pinning-correlation}
  \sum_{A\cap S=\varnothing}\Bigl|\E[Y_A\mid Y_S]-\prod_{e\in A}a_e\Bigr|\ \le\ \delta\,\E[N_H(p)] ,
\end{equation}
and on the same event
\begin{equation}\label{eq:pinning-feasible}
  \sum_{A\in\cH(H)}\ \prod_{e\in A}a_e\ \le\ (q^{e(H)}+\delta+e(H)\alpha)\E[N_H(p)] .
\end{equation}
\end{lemma}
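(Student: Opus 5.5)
We take $S$ from \cref{thm:KS}\ref{it:KS-increment} with parameters $\alpha$ and $\beta$, where $\beta=\beta(\alpha,\delta)$ is chosen below, and let $L=L(\alpha,\beta)$ be the resulting constant. The lower tail event is decreasing, so parts \ref{it:KS-harris} and \ref{it:KS-CS} apply to $\nu_{n,q}$.

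The bound $0\le a_e\le p$ is immediate for $e\in S$. For $e\notin S$ it is \cref{thm:KS}\ref{it:KS-harris} applied with $A=\{e\}$.

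For \eqref{eq:pinning-correlation}, let $Z$ denote the random sum on its left-hand side. For $A\cap S=\varnothing$ we have $a_e=\E[Y_e\mid Y_S]$ for every $e\in A$, so $Z$ is exactly the quantity bounded in expectation by \eqref{eq:KS-CS}:
\[
\E Z\le p^{e(H)}\bigl((e(H)-1)N_H\bigr)^{1/2}(\beta N_H)^{1/2}\le \bigl(e(H)\beta\bigr)^{1/2}\E[N_H(p)].
\]
Markov's inequality then gives $\Pr(Z>\delta\E[N_H(p)])\le (e(H)\beta)^{1/2}/\delta$. Choosing $\beta=\delta^4/e(H)$ makes this at most $\delta$, which is the required probability bound.

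For \eqref{eq:pinning-feasible}, split $\cH(H)$ into copies disjoint from $S$ and copies meeting $S$.

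For copies $A$ with $A\cap S=\varnothing$, on the good event we have
\[
\sum_{A\cap S=\varnothing}\prod_{e\in A}a_e\le\sum_{A\cap S=\varnothing}\E[Y_A\mid Y_S]+\delta\E[N_H(p)]\le \E[N_H(Y)\mid Y_S]+\delta\E[N_H(p)].
\]
Since $Y\in\cL_p(H,q^{e(H)})$ almost surely under $\nu_{n,q}$, the conditional expectation is at most $q^{e(H)}\E[N_H(p)]$.

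For copies meeting $S$, every $a_e\le p$, so their total contribution is at most $p^{e(H)}$ times the number of copies of $H$ in $K_n$ containing an edge of $S$. Each edge lies in exactly $e(H)N_H/\binom n2$ copies by symmetry. This count is therefore at most $|S|\,e(H)N_H/\binom n2\le e(H)\alpha N_H$, contributing at most $e(H)\alpha\,\E[N_H(p)]$.

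Adding the two parts gives \eqref{eq:pinning-feasible} on the same event.

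The main point requiring care is that the Markov step controls only the expectation from \eqref{eq:KS-CS}. This forces $\beta$ to be polynomially small in $\delta$, which is why $L$ depends on $\delta$. We must also check that the normalization $p^{e(H)}N_H=\E[N_H(p)]$ matches \eqref{eq:KS-CS}; it does.
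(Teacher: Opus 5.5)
Your proof is correct and follows the paper's argument essentially line by line. You take $S$ from the entropy increment lemma and bound the expected correlation sum with the Cauchy--Schwarz estimate, using $\beta$ of order $\delta^4$, then apply Markov; you then split the copies of $H$ into those disjoint from $S$ (controlled by the almost sure constraint together with the correlation bound) and those meeting $S$ (at most $e(H)\alpha N_H$ of them, each contributing at most $p^{e(H)}$), and your choice $\beta=\delta^4/e(H)$ in place of the paper's $\delta^4/(e(H)-1)$ makes no difference.
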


\begin{proof}
That $a_e\le p$ is \cref{thm:KS}\ref{it:KS-harris}. Apply \cref{thm:KS}\ref{it:KS-increment} to $\cH(H)$ with parameters $\alpha$ and $\beta$, obtaining $S$ with $|S|\le\alpha\binom n2$ and $\cE_H(S)\le\beta N_H$. Substituting into \eqref{eq:KS-CS} bounds the expectation of the left-hand side of \eqref{eq:pinning-correlation} by $\E[N_H(p)]\sqrt{(e(H)-1)\beta}$; choosing $\beta\le\delta^4/(e(H)-1)$ and applying Markov's inequality gives \eqref{eq:pinning-correlation} with probability at least $1-\delta$.

For \eqref{eq:pinning-feasible}, note that $N_H(Y)\le q^{e(H)}\E[N_H(p)]$ holds $\nu_{n,q}$-almost surely, so $\E[N_H(Y)\mid Y_S]\le q^{e(H)}\E[N_H(p)]$; combined with \eqref{eq:pinning-correlation} this controls the copies of $H$ disjoint from $S$. Every remaining copy meets $S$, and since each edge of $K_n$ lies in exactly $e(H)N_H/\binom n2$ copies of $H$, there are at most $e(H)|S|N_H/\binom n2\le e(H)\alpha N_H$ of these; each contributes at most $p^{e(H)}$ since $a_e\le p$.
\end{proof}

\subsection{Stability around the whole minimizer set}
Next, we show that the minimizer set of the variational problem is stable, in the sense that a near optimizer must be close in cut distance to the set of true minimizers.
\begin{lemma}\label{lem:graphon-gap}
For every $q\in(0,1)$ and every $\varepsilon>0$ there are $\theta_0,\gamma>0$ such that
\begin{equation}\label{eq:graphon-gap}
  t(H,W)\le q^{e(H)}+\theta_0\ \text{ and }\ d_q(W)\ge\varepsilon
  \quad\Longrightarrow\quad
  \Ent(W)\ge\Phi(q)+2\gamma .
\end{equation}
The implication persists, with the same $\gamma$, when $\theta_0$ is replaced by any smaller positive number.
\end{lemma}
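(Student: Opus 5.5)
The plan is a compactness argument by contradiction in $\widetilde\cW$, using only lower semicontinuity of $\Ent$ (\cref{lem:lsc}), continuity of $t(H,\cdot)$ and of $d_q$ in the cut metric, and continuity of $\Phi$ (\cref{lem:Phi-basic}). The last sentence of the lemma is immediate once the main implication is proved: shrinking $\theta_0$ shrinks the hypothesis set, so the conclusion persists with the same $\gamma$.

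Fix $q$ and $\varepsilon$, and set
\[
  F(\theta):=\inf\bigl\{\Ent(W):t(H,W)\le q^{e(H)}+\theta,\ d_q(W)\ge\varepsilon\bigr\},\qquad \inf\varnothing:=+\infty .
\]
It suffices to show that $\lim_{\theta\downarrow0}F(\theta)>\Phi(q)$; the limit exists because $F$ is nonincreasing in $\theta$. Then we may choose $\gamma>0$ with $2\gamma$ below this gap (or $\gamma=1$ if the limit is infinite) and $\theta_0$ small enough that $F(\theta_0)\ge\Phi(q)+2\gamma$.

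The first step is the case $\theta=0$. The set $K_0:=\{t(H,\cdot)\le q^{e(H)}\}\cap\{d_q\ge\varepsilon\}$ is closed in the compact space $\widetilde\cW$, since $t(H,\cdot)$ is continuous and $d_q$ is $1$-Lipschitz for $\delta_\cut$. If $K_0\neq\varnothing$, \cref{lem:lsc} shows that $\Ent$ attains its minimum on $K_0$ at some $W_0$. Then $\Ent(W_0)\ge\Phi(q)$ since $W_0$ is feasible. Equality would put $W_0\in\cM_q$, forcing $d_q(W_0)=0<\varepsilon$, which is impossible. Hence $F(0)>\Phi(q)$.

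The second step, which I expect to be the only real point, is continuity at $\theta=0$. Suppose $\theta_k\downarrow0$ and $W_k$ are feasible for $F(\theta_k)$ with $\Ent(W_k)\le F(\theta_k)+1/k$. Pass to a cut-convergent subsequence $W_k\to W$. Then $t(H,W)\le q^{e(H)}$ and $d_q(W)\ge\varepsilon$ by continuity, so $W\in K_0$. By \cref{lem:lsc},
\[
  F(0)\le\Ent(W)\le\liminf_k\Ent(W_k)=\lim_{\theta\downarrow0}F(\theta).
\]
Combining this with the first step gives the claim. If the constraint sets are eventually empty, the limit is $+\infty$ and there is nothing to prove. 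No quantitative input from \cref{sec:coercivity} is needed here; the only care required is that $d_q$ is measured against the compact set $\cM_q$, so that closedness of $\{d_q\ge\varepsilon\}$ is legitimate.
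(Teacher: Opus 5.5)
Your proof is correct and is essentially the paper's argument. Both take near-minimizers $W_k$ with $\theta_k\downarrow0$ and pass to a cut-convergent subsequence. Continuity of $t(H,\cdot)$ and the $1$-Lipschitz property of $d_q$ then place the limit in $\{t(H,\cdot)\le q^{e(H)}\}\cap\{d_q\ge\varepsilon\}$, and \cref{lem:lsc} forces the limit into $\cM_q$, a contradiction.

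Your two-step packaging via the monotone function $F$ only makes explicit what the paper's single contradiction argument does. One small correction: closedness of $\{d_q\ge\varepsilon\}$ needs only that $d_q$ is $1$-Lipschitz, not that $\cM_q$ is compact.
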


\begin{proof}
If not, there are $\theta_k\downarrow0$ and graphons $W_k$ with $t(H,W_k)\le q^{e(H)}+\theta_k$, $d_q(W_k)\ge\varepsilon$ and $\Ent(W_k)\le\Phi(q)+1/k$. Passing to a cut convergent subsequence with limit $W$, we get $t(H,W)\le q^{e(H)}$ and, by \cref{lem:lsc}, $\Ent(W)\le\Phi(q)$; hence $W\in\cM_q$, contradicting $d_q(W)\ge\varepsilon$, which passes to the limit because $d_q$ is $1$-Lipschitz. The last sentence is immediate, since shrinking $\theta_0$ strengthens the hypothesis.
\end{proof}

Fix $\varepsilon>0$ and let $\theta_0,\gamma$ be as in \cref{lem:graphon-gap}. For $0<\theta\le\theta_0$ with $q^{e(H)}+\theta<1$ put
\begin{equation}\label{eq:Lnp}
  L_{n,p}(\theta):=\Phi_p(H,q^{e(H)}+\theta),
\end{equation}
\begin{multline*}
  K_{n,p}(\theta,\varepsilon):=\inf\Bigl\{\sum_e\ip(x_e)\ :\ \mathbf x\in[0,p]^{\binom{[n]}2},
  \E[N_H(\mathbf x)]\le(q^{e(H)}+\theta)\E[N_H(p)],\
  d_q\bigl(W_{\mathbf x/p}\bigr)\ge\varepsilon\Bigr\}.
\end{multline*}

\begin{lemma}\label{lem:finite-gap}
With the notation above, $K_{n,p_n}(\theta,\varepsilon)-L_{n,p_n}(\theta)\ge\gamma\binom n2p_n$ for all sufficiently large $n$.
\end{lemma}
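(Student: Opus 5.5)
We compare both quantities to their graphon limits through \cref{prop:finite-limit}: a $\liminf$ for $K$ and a $\limsup$ for $L$.

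\emph{Step 1: lower bound for $K$.} We apply the restricted lower bound \eqref{eq:restricted-liminf} with $\eta_n\equiv\eta:=q^{e(H)}+\theta$ and $\cK:=\cM_q$. The set $\cM_q$ is closed (indeed compact, by \cref{lem:lsc}), and $\eta\in(0,1)$ by the choice of $\theta$. This gives
\[
  \liminf_n\frac{K_{n,p_n}(\theta,\varepsilon)}{\binom n2p_n}\ \ge\ \inf\bigl\{\Ent(W):t(H,W)\le q^{e(H)}+\theta,\ d_q(W)\ge\varepsilon\bigr\}.
\]
Since $\theta\le\theta_0$, \cref{lem:graphon-gap} shows that the right-hand side is at least $\Phi(q)+2\gamma$. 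If the feasible set is empty, the infimum is $+\infty$ and the bound holds trivially.

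\emph{Step 2: upper bound for $L$.} By \eqref{eq:finite-limit} with $\eta=q^{e(H)}+\theta$,
\[
  \frac{L_{n,p_n}(\theta)}{\binom n2p_n}\ \longrightarrow\ \Phi\bigl(H,q^{e(H)}+\theta\bigr).
\]
By \cref{lem:Phi-basic}, $\Phi$ is decreasing, so this limit is at most $\Phi(H,q^{e(H)})=\Phi(q)$.

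\emph{Step 3: combine.} For large $n$ we have $K\ge(\Phi(q)+\tfrac32\gamma)\binom n2p_n$ and $L\le(\Phi(q)+\tfrac12\gamma)\binom n2p_n$. Subtracting gives $K-L\ge\gamma\binom n2p_n$, as required.

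The main point to check is that \eqref{eq:restricted-liminf} really applies. Its infimum ranges over $\mathbf x\in[0,p_n]^{\binom{[n]}2}$ with exactly the same two constraints as in the definition of $K_{n,p}$, since $d_q(W_{\mathbf x/p})=\dist_\cut(W_{\mathbf x/p},\cM_q)$, so the two quantities coincide. It is also important that the $\gamma$ from \cref{lem:graphon-gap} is uniform over all $\theta\le\theta_0$; this is exactly the persistence clause in that lemma. We need this because the graphon-level bound is obtained at the fixed $\theta$, not in the limit $\theta\to0$.
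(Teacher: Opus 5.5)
Your proposal is correct and follows the paper's proof essentially verbatim. You apply \eqref{eq:restricted-liminf} with $\cK=\cM_q$ and $\eta_n\equiv q^{e(H)}+\theta$ together with \cref{lem:graphon-gap} for the $\liminf$ of $K$, and \eqref{eq:finite-limit} with monotonicity of $\Phi(H,\cdot)$ for the limit of $L$; your explicit $\tfrac32\gamma$/$\tfrac12\gamma$ bookkeeping in Step~3 just spells out the final subtraction that the paper leaves implicit.
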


\begin{proof}
Apply \eqref{eq:restricted-liminf} with $\cK=\cM_q$, which is compact, and $\eta_n\equiv q^{e(H)}+\theta$; combined with \eqref{eq:graphon-gap} it gives
$\liminf_nK_{n,p_n}(\theta,\varepsilon)/(\binom n2p_n)\ge\Phi(q)+2\gamma$. On the other hand \eqref{eq:finite-limit} gives $L_{n,p_n}(\theta)/(\binom n2p_n)\to \Phi(H,q^{e(H)}+\theta)\le \Phi(H,q^{e(H)})=\Phi(q)$.
\end{proof}

\subsection{Localization of the conditional marginals}
Combining the entropy decomposition and stability allows us to show that on most components of the decomposed measure, the individual edge marginals must lie close to the minimizer set. Let
\begin{equation*}
  D_n:=\KL\bigl(\nu_{n,q}\,\big\|\,G(n,p)\bigr)=-\log\Pr\bigl(\cL_p(H,q^{e(H)})\bigr),
\end{equation*}
the identity being the standard formula for the relative entropy of a conditioned measure. Fix a pinning set $S$. For $s$ in the support of $Y_S$, let $\nu_s$ be the conditional law of $(Y_e)_{e\notin S}$ and
\begin{equation*}
  Q_s:=\bigotimes_{e\notin S}\Ber\bigl(a_e(s)\bigr)
\end{equation*}
the product measure with the same one-edge marginals. The next lemma decomposes the entropy cost of the selected component into the remaining correlations and the entropy cost of the individual marginals.

\begin{lemma}\label{lem:projection}
For every such $s$,
\begin{equation}\label{eq:projection}
  \KL\bigl(\nu_s\,\big\|\,\Ber(p)^{\otimes(E\setminus S)}\bigr)=\KL(\nu_s\|Q_s)+\sum_{e\notin S}\ip\bigl(a_e(s)\bigr),
\end{equation}
and consequently
\begin{equation}\label{eq:two-consequences}
  \E\Bigl[\sum_{e}\ip(a_e)\Bigr]\le D_n
  \qquad\text{and}\qquad
  \E_{Y_S}\KL\bigl(\nu_{Y_S}\|Q_{Y_S}\bigr)\le D_n-\E\Bigl[\sum_e\ip(a_e)\Bigr].
\end{equation}
\end{lemma}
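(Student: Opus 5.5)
The identity \eqref{eq:projection} is the standard ``projection onto product measures'' decomposition of relative entropy. I will write it out by expanding logarithms. Write $P:=\Ber(p)^{\otimes(E\setminus S)}$ with $E=\binom{[n]}2$. For every configuration $y$ in the support of $\nu_s$ we have $\log(\nu_s/P)(y)=\log(\nu_s/Q_s)(y)+\log(Q_s/P)(y)$. If some $a_e(s)\in\{0,1\}$, then $\nu_s$ is supported where $Q_s>0$, so this is legitimate.

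The key observation is that $\log(Q_s/P)(y)=\sum_{e\notin S}\bigl[y_e\log(a_e/p)+(1-y_e)\log((1-a_e)/(1-p))\bigr]$ is affine in $y$. Its expectation under $\nu_s$ therefore depends only on the one-edge marginals of $\nu_s$, which are exactly $a_e(s)=\E[Y_e\mid Y_S=s]$. Hence this expectation equals $\sum_{e\notin S}\ip(a_e(s))$, with the convention $0\log 0=0$. Taking $\nu_s$-expectations of the pointwise identity gives \eqref{eq:projection}.

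For \eqref{eq:two-consequences} I will use the chain rule for relative entropy with respect to the product reference $G(n,p)=\Ber(p)^{\otimes S}\otimes P$. This gives
\[
D_n=\KL(\nu_{n,q}\|G(n,p))=\KL\bigl(\mathrm{law}(Y_S)\,\big\|\,\Ber(p)^{\otimes S}\bigr)+\E_{Y_S}\KL(\nu_{Y_S}\|P).
\]
The first term is nonnegative. Substituting \eqref{eq:projection} into the second term gives
\[
D_n\ge\E_{Y_S}\KL(\nu_{Y_S}\|Q_{Y_S})+\E\sum_{e\notin S}\ip(a_e).
\]
For $e\in S$ we set $a_e=p$, so $\ip(a_e)=0$ and the sum may be taken over all $e$. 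Rearranging gives the second inequality of \eqref{eq:two-consequences}. Dropping the nonnegative $\KL(\nu_{Y_S}\|Q_{Y_S})$ gives the first.

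The only point needing care is the expansion of $\log(Q_s/P)$ when some marginal equals $0$ or $1$. There the corresponding coordinate is deterministic under both $\nu_s$ and $Q_s$, so the terms involving it vanish consistently. The identity $D_n=-\log\Pr(\cL_p)$ is already recorded above and is not needed here.
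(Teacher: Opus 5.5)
Your proof is correct and follows the paper's argument essentially verbatim. You factor the density through $Q_s$, use that $\log(\mathrm dQ_s/\mathrm dP)$ is affine in $y$ so its $\nu_s$-expectation is $\sum_{e\notin S}\ip(a_e)$, apply the chain rule for $D_n$ over $Y_S$, drop the nonnegative marginal term, and use $\ip(p)=0$ on $S$; your treatment of degenerate marginals plays the same role as the paper's remark that $\nu_s\ll Q_s$.
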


\begin{proof}
Writing $\mathrm d\nu_s/\mathrm dP=(\mathrm d\nu_s/\mathrm dQ_s)\cdot(\mathrm dQ_s/\mathrm dP)$ with $P=\Ber(p)^{\otimes(E\setminus S)}$ and taking $\nu_s$-expectations, the second factor contributes
\[
  \E_{\nu_s}\sum_{e\notin S}\Bigl[Y_e\log\frac{a_e}{p}+(1-Y_e)\log\frac{1-a_e}{1-p}\Bigr]=\sum_{e\notin S}\ip(a_e),
\]
because $\E_{\nu_s}Y_e=a_e$. That $\nu_s\ll Q_s$ holds since $a_e=0$ forces $Y_e=0$ under $\nu_s$, and $a_e\le p<1$ always. This is \eqref{eq:projection}. Averaging over $s$ and using the chain rule
\[
  D_n=\KL\bigl(\operatorname{Law}_{\nu_{n,q}}(Y_S)\,\big\|\,\Ber(p)^{\otimes S}\bigr)
  +\E_{Y_S}\KL(\nu_{Y_S}\|P)\ \ge\ \E_{Y_S}\KL(\nu_{Y_S}\|P)
\]
gives both statements in \eqref{eq:two-consequences}, on noting that $\ip(p)=0$ so that $\sum_{e\notin S}\ip(a_e)=\sum_e\ip(a_e)$.
\end{proof}

Since a component with marginals far from the minimizer set has a strictly greater entropy cost, the lower tail measure cannot afford to have many such components. This is formalized in the next proposition, showing that with large probability the marginals are close to the minimizer set. 
\begin{proposition}\label{prop:localization}
Fix $q\in(0,1)$ and $\varepsilon,\zeta>0$, and let $\alpha_0>0$ be prescribed. There is $\alpha\in(0,\alpha_0]$ such that, for all sufficiently large $n$, the pinning set $S=S_n$ of \cref{lem:pinning} satisfies $|S|\le\alpha\binom n2$ and
\begin{equation}\label{eq:localization}
  \nu_{n,q}\bigl(d_q(W_{\mathbf a/p})>\varepsilon\bigr)<\zeta
  \qquad\text{and}\qquad
  \E_{Y_S}\KL\bigl(\nu_{Y_S}\|Q_{Y_S}\bigr)<\zeta n^2p .
\end{equation}
\end{proposition}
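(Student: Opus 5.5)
The argument compares two bounds on $\E[\sum_e \ip(a_e)]$: an upper bound by $D_n$ from \cref{lem:projection}, and a lower bound from the finite gap of \cref{lem:finite-gap} applied on the event where the marginals are far from $\cM_q$.

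\emph{Choice of parameters.} Fix $\varepsilon,\zeta$ and let $\theta_0,\gamma$ be given by \cref{lem:graphon-gap}. Choose $\theta\le\theta_0$ with $q^{e(H)}+\theta<1$, and then $\delta,\alpha\le\alpha_0$ so small that $\delta+e(H)\alpha\le\theta$ and $\delta\le\zeta/2$. \Cref{lem:pinning} then gives, for large $n$, a set $S$ with $|S|\le\alpha\binom n2$ and a good event $G$ of probability at least $1-\delta$ on which \eqref{eq:pinning-feasible} holds. On $G$ the vector $\mathbf a\in[0,p]^{\binom{[n]}2}$ is therefore feasible for $L_{n,p}(\theta)$. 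Hence:
\begin{itemize}
\item on all of $G$, $\sum_e\ip(a_e)\ge L_{n,p}(\theta)$;
\item on $G\cap\{d_q(W_{\mathbf a/p})>\varepsilon\}$, $\sum_e\ip(a_e)\ge K_{n,p}(\theta,\varepsilon)\ge L_{n,p}(\theta)+\gamma\binom n2p$, by \cref{lem:finite-gap}.
\end{itemize}
Off $G$ we only use $\sum_e\ip(a_e)\ge0$.

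\emph{The key averaging step.} Write $F=\{d_q(W_{\mathbf a/p})>\varepsilon\}$. Taking expectations gives
\[
\E\Bigl[\sum_e\ip(a_e)\Bigr]\ge(1-\delta)L_{n,p}(\theta)+\gamma\tbinom n2p\,\Pr(G\cap F).
\]
By \cref{cor:ldp}, $D_n=(\Phi(q)+o(1))\binom n2p$. By \eqref{eq:finite-limit}, $L_{n,p}(\theta)=(\Phi(H,q^{e(H)}+\theta)+o(1))\binom n2p$. Combining with \eqref{eq:two-consequences} and dividing by $\binom n2p$,
\[
\gamma\,\Pr(G\cap F)\le\Phi(q)-(1-\delta)\Phi(H,q^{e(H)}+\theta)+o(1).
\]
The right side is small once $\theta$ and $\delta$ are small, because $\Phi(H,\cdot)$ is continuous (\cref{lem:Phi-basic}). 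Note that $\gamma$ depends only on $q$ and $\varepsilon$, not on $\theta$, by the persistence clause of \cref{lem:graphon-gap}. So we may shrink $\theta$ until the right side is at most $\gamma\zeta/4$. This yields $\Pr(F)\le\Pr(G\cap F)+\delta<\zeta$.

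\emph{The total correlation bound.} Using \eqref{eq:two-consequences} and the first bullet,
\[
\E_{Y_S}\KL(\nu_{Y_S}\|Q_{Y_S})\le D_n-\E\Bigl[\sum_e\ip(a_e)\Bigr]\le D_n-(1-\delta)L_{n,p}(\theta).
\]
The right side is at most $\bigl(\Phi(q)-(1-\delta)\Phi(H,q^{e(H)}+\theta)+o(1)\bigr)\binom n2p$. By the same choice of small $\theta,\delta$ this is below $\zeta n^2p$.

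\emph{Main obstacle.} The delicate point is the order of quantifiers. The gap $\gamma$ must be fixed first, from $\varepsilon$ alone. Only then are $\theta$ and $\delta$ (and with them $\alpha$ and the constant $L$ of \cref{lem:pinning}) chosen small relative to $\gamma\zeta$. This works because $\Phi(H,\cdot)$ is continuous at $q^{e(H)}$. Since $p_nn^{1/m_2(H)}\to\infty$, the condition $p\ge Ln^{-1/m_2(H)}$ holds for all large $n$ whatever $L$ is.
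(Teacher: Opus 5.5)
Your proposal is correct and follows the paper's proof essentially step for step. It uses the same parameter choices from \cref{lem:graphon-gap} and \cref{lem:pinning}, the same mixture lower bound on $\E\sum_e\ip(a_e)$ compared against $D_n$ via \eqref{eq:two-consequences}, \cref{cor:ldp} and \cref{lem:finite-gap}, and the same reuse of $(1-\delta)L_{n,p}(\theta)$ for the total correlation bound. The only thing to tidy is the order of choices: fix $\theta$ first, using continuity of $\Phi(H,\cdot)$, and then $\delta$ and $\alpha$ subject to $\delta+e(H)\alpha\le\theta$, rather than ``shrinking $\theta$'' afterwards. This is exactly the order the paper's argument needs.
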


\begin{proof}
Let $\theta_0,\gamma>0$ be given by \cref{lem:graphon-gap} for the parameter $\varepsilon$. Choose $0<\theta\le\theta_0$ with $q^{e(H)}+\theta<1$, then $\delta>0$ and $\alpha\in(0,\alpha_0]$, subject to
\begin{equation}\label{eq:parameter-choice}
  \delta+e(H)\alpha<\theta ,
\end{equation}
the precise smallness being fixed at the end of the proof. Apply \cref{lem:pinning} with these $\alpha,\delta$ and let $\cG$ be the good event there, so $\Pr(\cG)\ge1-\delta$.

On $\cG$, the vector $\mathbf a$ satisfies $0\le a_e\le p$ and, by \eqref{eq:pinning-feasible} and \eqref{eq:parameter-choice}, the constraint of \eqref{eq:Lnp}; hence $\sum_e\ip(a_e)\ge L_{n,p}(\theta)$. If in addition $d_q(W_{\mathbf a/p})\ge\varepsilon$ then $\sum_e\ip(a_e)\ge K_{n,p}(\theta,\varepsilon)$. Since $\ip\ge0$, discarding $\cG^c$ gives
\begin{equation}\label{eq:mixture}
  \E\sum_e\ip(a_e)\ \ge\ \Pr(\cG)L_{n,p}(\theta)+\Pr\bigl(\cG\cap\{d_q(W_{\mathbf a/p})\ge\varepsilon\}\bigr)\bigl(K_{n,p}(\theta,\varepsilon)-L_{n,p}(\theta)\bigr).
\end{equation}
Divide by $\binom n2p$ and use \eqref{eq:two-consequences}, \cref{cor:ldp}, \cref{prop:finite-limit} and \cref{lem:finite-gap}:
\begin{equation}\label{eq:localization-bound}
  \limsup_{n\to\infty}\ \Pr\bigl(\cG\cap\{d_q(W_{\mathbf a/p})\ge\varepsilon\}\bigr)\ \le\ \frac{\Phi(q)-(1-\delta)\Phi(H,q^{e(H)}+\theta)}{\gamma}.
\end{equation}
Since $\Phi(H,\cdot)$ is continuous and $\Phi(H,q^{e(H)})=\Phi(q)$, and since $\gamma$ does not depend on $\theta$ or $\delta$, the right-hand side of \eqref{eq:localization-bound} is at most $\zeta/2$ once $\theta$ and $\delta$ are chosen small enough; shrinking $\delta$ further so that $\delta<\zeta/2$ and adding $\Pr(\cG^c)\le\delta$ proves the first half of \eqref{eq:localization}.

For the second half, combine the second inequality of \eqref{eq:two-consequences} with the bound $\E\sum_e\ip(a_e)\ge(1-\delta)L_{n,p}(\theta)$ from \eqref{eq:mixture}:
\[
  \E_{Y_S}\KL(\nu_{Y_S}\|Q_{Y_S})\ \le\ D_n-(1-\delta)L_{n,p}(\theta).
\]
Dividing by $n^2p$ and using $\binom n2p/(n^2p)\to\tfrac12$ together with \cref{cor:ldp} and \eqref{eq:finite-limit},
\[
  \limsup_{n\to\infty}\frac1{n^2p}\E_{Y_S}\KL(\nu_{Y_S}\|Q_{Y_S})\ \le\ \tfrac12\bigl(\Phi(q)-(1-\delta)\Phi(H,q^{e(H)}+\theta)\bigr),
\]
which is less than $\zeta$ for $\theta,\delta$ small.
\end{proof}

\subsection{From the marginals to the sampled graph}
Finally, we transfer closeness in marginals to the conditional measure through the property that each component only has weak correlations among the edge marginals. This allows us to conclude that the full conditioned graph must be close to the optimizer set of the variational problem. 

For $\mathbf x\in[0,\infty)^{\binom{[n]}2}$ recall that $A_{\mathbf x}$ is the associated $n$-part step kernel, so $A_{\mathbf x}/p=W_{\mathbf x/p}$; for $R\subseteq \binom{[n]}2$, $A_{\mathbf x}\mathbf 1_R$ denotes the kernel obtained by keeping the two blocks of each edge of $R$ and zeroing all others. We need two standard estimates.

\begin{lemma}\label{lem:product-cut}
There is an absolute constant $c>0$ with the following property. Let $(X_e)_{e\in \binom{[n]}2}$ be independent with $X_e\sim\Ber(a_e)$ and $a_e\le p$, where $np\to\infty$. Then for every $\varepsilon\in(0,1)$ and all large $n$,
\begin{equation}\label{eq:product-cut}
  \Pr\bigl(\|A_X-A_{\mathbf a}\|_\cut>\varepsilon p\bigr)\ \le\ \exp\bigl(-c\,\varepsilon^2n^2p\bigr).
\end{equation}
The same bound holds if only the edges of a prescribed subset are sampled and the remaining entries are set to zero.
\end{lemma}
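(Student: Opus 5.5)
The plan is to reduce the cut norm to finitely many combinatorial cuts, apply Bernstein's inequality to each, and finish with a union bound.

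\emph{Reduction to vertex-set cuts.} For a step kernel with $n$ equal parts, the supremum in \eqref{eq:cut-norm} can be restricted to sets $S,T$ that are unions of parts, up to a constant factor. Indeed, for fixed $T$ the integral $\int_{S\times T}U$ is linear in the indicator of $S$ restricted to each part $I_i$, so it is maximized at an extreme point; the same holds for $T$ given $S$. Hence
\[
  \|A_X-A_{\mathbf a}\|_\cut\le\max_{S,T\subseteq[n]}\frac1{n^2}\Bigl|\sum_{i\in S,\,j\in T,\,i\ne j}(X_{ij}-a_{ij})\Bigr|,
\]
with the diagonal blocks vanishing. It therefore suffices to bound each of the $4^n$ sums
\[
  Z_{S,T}:=\sum_{i\in S,j\in T,i\ne j}(X_{ij}-a_{ij})
\]
in absolute value by $\varepsilon pn^2$. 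Each pair $\{i,j\}$ appears at most twice, so $Z_{S,T}=\sum_e c_e(X_e-a_e)$ with $c_e\in\{0,1,2\}$.

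\emph{Bernstein for a single cut.} The summands are independent, centered, and bounded by $2$. Their total variance is at most $4\sum_e a_e\le 4\binom n2p\le 2n^2p$. Bernstein's inequality then gives
\[
  \Pr\bigl(|Z_{S,T}|>\varepsilon pn^2\bigr)\le2\exp\Bigl(-\frac{\varepsilon^2p^2n^4/2}{2n^2p+\tfrac23\varepsilon pn^2}\Bigr)\le2\exp\bigl(-c_1\varepsilon^2n^2p\bigr)
\]
for $\varepsilon<1$ and an absolute constant $c_1$.

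\emph{Union bound.} Summing over the $4^n$ pairs $(S,T)$ gives a total failure probability of at most
\[
  2\cdot4^n e^{-c_1\varepsilon^2n^2p}.
\]
Since $np\to\infty$, we have $n\log4=o(\varepsilon^2n^2p)$ for fixed $\varepsilon$. So for all large $n$ this is at most $e^{-c\varepsilon^2n^2p}$ with, say, $c=c_1/2$.

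\emph{Partial sampling.} If only edges of a prescribed subset $R$ are sampled and the rest are set to zero, the statement compares $A_X\mathbf 1_R$ with $A_{\mathbf a}\mathbf 1_R$. The argument is identical: restrict the sums to $e\in R$, which only decreases the variance bound.

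The main obstacle is the reduction of the cut norm to unions of parts. It must be stated carefully, since the extremal-point argument is one-sided per coordinate; I would do it by first fixing $T$ and optimizing over $S$, then the reverse. If one prefers to avoid this step, the generic bound $\|U\|_\cut\le 4\max$ over combinatorial cuts suffices and costs only a constant. The requirement that $\varepsilon$ be fixed while $n\to\infty$ is what absorbs the $4^n$ entropy factor.
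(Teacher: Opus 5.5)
Your proposal is correct and follows the paper's proof essentially step for step. Both arguments reduce the cut norm of the block kernel to unions of the intervals $I_i$ by an extreme-point argument, write each cut as $\frac1{n^2}\sum_e c_e(X_e-a_e)$ with $c_e\in\{0,1,2\}$, apply Bernstein with total variance at most $2n^2p$, and take a union bound over the $4^n$ pairs $(S,T)$, which is absorbed because $np\to\infty$.

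One small correction: your hedge ``up to a constant factor'' is unnecessary, because the reduction is exact. For fixed $T$, $|\int_{S\times T}U|$ is a convex function of the proportions $|S\cap I_i|$, so it is maximized at a vertex, and the same argument then applies to $T$.
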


\begin{proof}
For a kernel that is constant on the blocks $I_i\times I_j$, the supremum in \eqref{eq:cut-norm} may be restricted to unions of the intervals $I_i$: writing a measurable set through its intersection proportions with the $I_i$, the functional is affine in each proportion, so an extreme point may be chosen. Fix $S,T\subseteq[n]$. Then
\begin{gather*}
  \int_{S\times T}(A_X-A_{\mathbf a})=\frac1{n^2}\sum_{e=\{i,j\}}c_e\,(X_e-a_e),\\
  c_e:=\mathbf 1[i\in S,j\in T]+\mathbf 1[j\in S,i\in T]\in\{0,1,2\}.
\end{gather*}
The summands are independent, centered, bounded by $2$ in absolute value, and have total variance at most $4\sum_ea_e\le2n^2p$. Bernstein's inequality gives
\[
  \Pr\Bigl(\Bigl|\sum_ec_e(X_e-a_e)\Bigr|>\varepsilon pn^2\Bigr)\le2\exp\bigl(-\varepsilon^2n^2p/6\bigr).
\]
There are at most $4^n$ pairs $(S,T)$, and $4^n=\exp(o(\varepsilon^2n^2p))$ because $np\to\infty$; a union bound therefore gives \eqref{eq:product-cut} with $c=1/12$ for large $n$. Restricting to a subset of edges only decreases the variance.
\end{proof}

\begin{lemma}\label{lem:transfer}
Let $Q\ll P$ be probability measures on a common measurable space and let $B$ be an event with $P(B)\le e^{-L}$ for some $L>0$. Then $Q(B)\le\bigl(\KL(Q\|P)+\log2\bigr)/L$.
\end{lemma}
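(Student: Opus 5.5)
The plan is to deduce the bound from the Donsker--Varadhan variational formula, or equivalently from the data-processing inequality for relative entropy applied to the indicator of $B$. First, I handle the trivial cases. If $Q(B)=0$ there is nothing to prove. If $P(B)=0$, then $Q\ll P$ forces $Q(B)=0$. So I assume $0<Q(B)$ and $0<P(B)\le e^{-L}$.

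Next, I push both measures forward under the map $\omega\mapsto\mathbf 1_B(\omega)$. Data processing (monotonicity of $\KL$ under measurable maps) gives
\[
  \KL(Q\|P)\ \ge\ Q(B)\log\frac{Q(B)}{P(B)}+Q(B^c)\log\frac{Q(B^c)}{P(B^c)} .
\]
Writing $x:=Q(B)$, the right-hand side equals $-H(x)-x\log P(B)-(1-x)\log P(B^c)$. Here $H(x)=-x\log x-(1-x)\log(1-x)\le\log2$ is the binary entropy. I then bound the pieces separately: $-\log P(B^c)\ge0$ because $P(B^c)\le1$, and $-x\log P(B)\ge xL$ by hypothesis. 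This yields
\[
  \KL(Q\|P)\ \ge\ xL-\log 2 ,
\]
and rearranging gives $Q(B)\le(\KL(Q\|P)+\log2)/L$, which is the claim. An alternative route gives the same result: apply Donsker--Varadhan, $\E_Q f\le\KL(Q\|P)+\log\E_Pe^f$, with $f=L\mathbf 1_B$. Then $\log\E_Pe^f\le\log(1+e^LP(B))\le\log2$.

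There is no real obstacle in this argument. The only point requiring care is justifying the data-processing inequality, or the two-point reduction, when $\KL(Q\|P)=\infty$. In that case the bound is vacuous, so it suffices to treat finite $\KL$, where the standard statement applies.
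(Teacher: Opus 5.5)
Your proof is correct and is essentially the paper's argument: data processing onto the two-point partition $\{B,B^c\}$, bounding the binary entropy term by $\log 2$, dropping the nonnegative term $-(1-Q(B))\log P(B^c)$, and using $\log(1/P(B))\ge L$. Your Donsker--Varadhan alternative with $f=L\mathbf 1_B$ is also valid and gives the same bound in one line.
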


\begin{proof}
By the data-processing inequality applied to the two-point partition $\{B,B^c\}$,
\[
  \KL(Q\|P)\ \ge\ Q(B)\log\frac{Q(B)}{P(B)}+\bigl(1-Q(B)\bigr)\log\frac{1-Q(B)}{1-P(B)}
  \ \ge\ Q(B)L-\log2,
\]
where we bounded
\[
  Q(B)\log Q(B)+\bigl(1-Q(B)\bigr)\log\bigl(1-Q(B)\bigr)\ \ge\ -\log2
\]
and used $\log(1/P(B))\ge L$ and $1-P(B)\le1$.
\end{proof}

The first estimate shows that graphs sampled from independent marginals are close to their associated graphons, and the second transfers this statement to joint distributions with only weak correlations. Applying these two estimates to the decomposed components of the conditional measure proves \cref{thm:probabilistic}\ref{it:cut}.

\begin{proof}[Proof of \cref{thm:probabilistic}\ref{it:cut}]
Fix $\varepsilon>0$ and $\xi\in(0,1)$, and let $c$ be the constant of \cref{lem:product-cut}. Apply \cref{prop:localization} with parameters $\varepsilon/3$ and $\zeta:=\xi$, and with $\alpha_0:=\xi$, to obtain a pinning set $S$ with $|S|\le\alpha\binom n2$, $\alpha\le\xi$, such that for all large $n$
\begin{equation}\label{eq:two-inputs}
  \nu_{n,q}\bigl(d_q(W_{\mathbf a/p})>\varepsilon/3\bigr)<\xi,
  \qquad
  \E_{Y_S}\KL(\nu_{Y_S}\|Q_{Y_S})<\xi n^2p .
\end{equation}
Decompose
\begin{equation*}
  \dist_\cut\Bigl(\frac{A_Y}p,\ \cM_q\Bigr)\ \le\ d_q\bigl(W_{\mathbf a/p}\bigr)
  +\Bigl\|\frac{(A_Y-A_{\mathbf a})\mathbf 1_{E\setminus S}}p\Bigr\|_\cut
  +\Bigl\|\frac{(A_Y-A_{\mathbf a})\mathbf 1_{S}}p\Bigr\|_\cut .
\end{equation*}
The first term exceeds $\varepsilon/3$ with probability less than $\xi$ by \eqref{eq:two-inputs}.

For the second term, condition on $Y_S=s$. Under $Q_s$ the unpinned coordinates are independent with means $a_e\le p$, so \cref{lem:product-cut} gives
\[
  Q_s\Bigl(\Bigl\|\frac{(A_Y-A_{\mathbf a})\mathbf 1_{E\setminus S}}p\Bigr\|_\cut>\frac\varepsilon3\Bigr)\le\exp\bigl(-c'\varepsilon^2n^2p\bigr),\qquad c':=c/9,
\]
uniformly over $s$ in the support of $Y_S$ and for all large $n$. Since $\nu_s\ll Q_s$ (\cref{lem:projection}), \cref{lem:transfer} with $L=c'\varepsilon^2n^2p$ and averaging over $s$ give
\begin{equation*}
  \nu_{n,q}\Bigl(\Bigl\|\frac{(A_Y-A_{\mathbf a})\mathbf 1_{E\setminus S}}p\Bigr\|_\cut>\frac\varepsilon3\Bigr)
  \le\frac{\E_{Y_S}\KL(\nu_{Y_S}\|Q_{Y_S})+\log2}{c'\varepsilon^2n^2p}
  \le\frac{\xi}{c'\varepsilon^2}+o(1),
\end{equation*}
using \eqref{eq:two-inputs} and $n^2p\to\infty$.

For the third term, recall $a_e=p$ for $e\in S$ and $\E_{\nu_{n,q}}Y_e\le p$ by \cref{thm:KS}\ref{it:KS-harris}. Each edge occupies two blocks of area $n^{-2}$, so
\begin{equation*}
  \E_{\nu_{n,q}}\Bigl\|\frac{(A_Y-A_{\mathbf a})\mathbf 1_S}p\Bigr\|_\cut
  \le\frac2{n^2p}\sum_{e\in S}\E|Y_e-p|\le\frac{4|S|}{n^2}\le2\alpha\le2\xi ,
\end{equation*}
so by Markov's inequality this term exceeds $\varepsilon/3$ with probability at most $6\xi/\varepsilon$.

Combining the three estimates,
\[
  \limsup_{n\to\infty}\ \nu_{n,q}\Bigl(\dist_\cut\bigl(A_Y/p,\cM_q\bigr)>\varepsilon\Bigr)\ \le\ \xi+\frac{\xi}{c'\varepsilon^2}+\frac{6\xi}\varepsilon .
\]
As $\xi>0$ was arbitrary and $\varepsilon$ is fixed, the left-hand side is $0$.
\end{proof}

\subsection{Consequences for the conditioned graph}\label{sec:consequences}

We first show that the number of copies of $H$ is the maximum allowed by the lower tail constraint, proving \cref{thm:probabilistic}\ref{it:sat}.

\begin{proof}[Proof of \cref{thm:probabilistic}\ref{it:sat}]
The upper bound holds almost surely. For the lower bound fix $\delta\in(0,q^{e(H)})$ and set $q_-:=(q^{e(H)}-\delta)^{1/e(H)}<q$. Since $\cL_p(H,q_-^{e(H)})\subseteq\cL_p(H,q^{e(H)})$,
\[
  \Pr\Bigl(N_H(G_{\cL})\le(q^{e(H)}-\delta)\E[N_H(p_n)]\Bigr)=\frac{\Pr(\cL_p(H,q_-^{e(H)}))}{\Pr(\cL_p(H,q^{e(H)}))} .
\]
By \cref{cor:ldp} the logarithm of this ratio, divided by $\binom n2p_n$, tends to $\Phi(q)-\Phi(q_-)$, which is strictly negative because $\Phi$ is strictly decreasing (\cref{lem:Phi-basic}). Since $n^2p_n\to\infty$, the ratio tends to $0$.
\end{proof}

We conclude by showing that the edge density is an order parameter for the phase transition, proving \cref{thm:probabilistic}\ref{it:order}. Let
\begin{equation*}
  R_q:=\bigl\{\dens(W):W\in\cM_q\bigr\}\subseteq[0,1],
\end{equation*}
a compact set, being the continuous image of the compact set $\cM_q$.

\begin{lemma}\label{cor:edge-localization}
Under the hypotheses of \cref{thm:probabilistic}, $\dist\bigl(e(G_{\cL})/(\binom n2p_n),\,R_q\bigr)\to0$ in probability under $\nu_{n,q}$.
\end{lemma}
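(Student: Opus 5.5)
The plan is to deduce this directly from \cref{thm:probabilistic}\ref{it:cut}, using that the edge density is $1$-Lipschitz in the cut distance, including for the unbounded kernel $A_Y/p$.

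First, express the normalized edge count through the rescaled adjacency kernel. Since $A_Y$ takes the value $1$ on the two blocks of each present edge and vanishes on the diagonal blocks,
\[
  \dens(A_Y/p)=\frac{2e(Y)}{n^2p}=\frac{e(Y)}{\binom n2p}\Bigl(1-\frac1n\Bigr).
\]
Because the graph is conditioned on a decreasing event, \cref{thm:KS}\ref{it:KS-harris} with $S=\varnothing$ gives $\E e(Y)\le\binom n2p$. Moreover, $e(Y)/(\binom n2 p)\le 1/p$ deterministically, so the multiplicative error $1/n$ is negligible only after an additional check. For this I will use Markov's inequality: $e(Y)/(\binom n2p)$ is $O_P(1)$, so the difference $\frac1n\,e(Y)/(\binom n2p)\to0$ in probability. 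It therefore suffices to show that $\dist(\dens(A_Y/p),R_q)\to0$ in probability.

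Second, the key deterministic estimate. For any integrable symmetric kernel $U$ and any graphon $W$ we have $|\dens(U)-\dens(W)|\le\delta_\cut(U,W)$. The reason is that $\dens$ is invariant under measure-preserving relabelings, so $\dens(W^\phi)=\dens(W)$, and $|\dens(U)-\dens(W^\phi)|=|\int(U-W^\phi)|\le\|U-W^\phi\|_\cut$ by taking $S=T=[0,1]$ in \eqref{eq:cut-norm}. Taking the infimum over $\phi$ gives the claim. Taking a further infimum over $W\in\cM_q$ yields
\[
  \dist\bigl(\dens(A_Y/p),R_q\bigr)\le\inf_{W\in\cM_q}\bigl|\dens(A_Y/p)-\dens(W)\bigr|\le d_q(A_Y/p).
\]

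Third, \cref{thm:probabilistic}\ref{it:cut} states exactly that $d_q(A_Y/p)\to0$ in probability under $\nu_{n,q}$. Combining this with the two previous steps proves the lemma. The only point requiring care is the unboundedness of $A_Y/p$. This causes no trouble, because the cut norm and $\delta_\cut$ are defined for integrable kernels, and the Lipschitz bound above uses only integrability. The $1/n$ diagonal correction is handled by the Markov bound, so I do not expect a genuine obstacle here.
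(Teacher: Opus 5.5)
Your proof is correct and matches the paper's argument: the bound $|\dens(U)-\dens(W)|\le\delta_\cut(U,W)$ reduces the claim to \cref{thm:probabilistic}\ref{it:cut}, and what remains is the diagonal correction. The one difference is that the paper handles the $1/n$ correction deterministically, since $e(G)\le\binom n2$ already bounds the difference by $1/(np_n)\to0$; your Markov step via \cref{thm:KS}\ref{it:KS-harris} is valid but unnecessary.
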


\begin{proof}
The integral of a kernel is unchanged by relabeling and is dominated by the cut norm, so for every $W$ we have $|\dens(A_{G_{\cL}}/p_n)-\dens(W)|\le\delta_\cut(A_{G_{\cL}}/p_n,W)$; hence
\[
  \dist\bigl(\dens(A_{G_{\cL}}/p_n),\,R_q\bigr)\ \le\ \dist_\cut\bigl(A_{G_{\cL}}/p_n,\,\cM_q\bigr),
\]
which tends to $0$ by \cref{thm:probabilistic}\ref{it:cut}. It remains to replace $\dens(A_{G_{\cL}}/p_n)$ by the normalized edge count. Since the diagonal blocks vanish,
\[
  \dens\Bigl(\frac{A_{G_{\cL}}}{p_n}\Bigr)=\frac{2e(G_{\cL})}{n^2p_n}=\Bigl(1-\frac1n\Bigr)\frac{e(G_{\cL})}{\binom n2p_n},
\]
so the two quantities differ by $\bigl(e(G_{\cL})/\binom n2p_n\bigr)/n\le1/(np_n)\to0$.
\end{proof}

\begin{proof}[Proof of \cref{thm:probabilistic}\ref{it:order}]
By \cref{thm:threshold}, $R_q=\{q\}$ when $q>q_c$; $R_q\subseteq[q+\kappa(q),1]$ when $q<q_c$; and $R_{q_c}\subseteq\{q_c\}\cup[q_c+\kappa(q_c),\,q_c+\Delta]$. In each case the assertion follows from \cref{cor:edge-localization}.
\end{proof}

\end{document}